\newtheorem{theorem}{Theorem}[section]
\newtheorem{lemma}{Lemma}[section]
\newtheorem{remark}{Remark}[section]
\newtheorem{corollary}{Corollary}[section]
\newtheorem{example}{Example}[section]
\newtheorem{proposition}{Proposition}[section]
\numberwithin{equation}{section}
\begin{document}
	
\title{Further inequalities for the numerical radius of Hilbert space operators}
\author{Sara Tafazoli$^1$, Hamid Reza Moradi$^2$, SHIGERU FURUICHI$^3$ and PANACKAL HARIKRISHNAN$^4$}
\subjclass[2010]{Primary 47A12, Secondary 47A30, 15A60,  47A63.}
\keywords{Operator inequality, norm inequality, numerical radius, convex function, $f$-connection, weighted arithmetic-geometric mean inequality.}
\maketitle

\begin{abstract}
In this article, we present some new inequalities for numerical radius of Hilbert space operators via convex functions. Our results generalize and improve earlier results by El-Haddad and Kittaneh.  Among several results, we show that	if $A\in \mathbb{B}\left( \mathcal{H} \right)$ and $r\ge 2$, then
\[{{w}^{r}}\left( A \right)\le {{\left\| A \right\|}^{r}}-\underset{\left\| x \right\|=1}{\mathop{\inf }}\,{{\left\| {{\left| \left| A \right|-w\left( A \right) \right|}^{\frac{r}{2}}}x \right\|}^{2}}\]
where $w\left( \cdot \right)$ and $\left\| \cdot \right\|$ denote the numerical radius and usual operator norm, respectively.
\end{abstract}
\pagestyle{myheadings}
\markboth{\centerline {S. Tafazoli, H. R. Moradi, S. Furuichi \& P. K. Harikrishnan}}
{\centerline {Further inequalities for the numerical radius of Hilbert space operators}}
\bigskip
\bigskip
\section{Introduction}
Let $\mathbb{B}(\mathcal{H})$ denote the $C^*$-algebra of all bounded linear operators acting on a Hilbert space $\mathcal{H}.$ As
customary, we reserve $m$, $M$ for scalars. An operator $A$ on $\mathcal{H}$ is said to be positive (in symbol: $A\ge 0$) if $\left\langle Ax ,x  \right\rangle \ge 0$ for all $x \in \mathcal{H}$. We write $A>0$ if $A$ is positive and invertible. For self-adjoint operators $A$ and $B$, we write $A\ge B$ if $A-B$ is positive, i.e., $\left\langle Ax ,x  \right\rangle \ge \left\langle Bx ,x  \right\rangle $ for all $x \in \mathcal{H}$. We call it the usual order. In particular, for some scalars $m$ and $M$, we write $m\le A\le M$ if $m\left\langle x ,x  \right\rangle \le \left\langle Ax ,x  \right\rangle \le M\left\langle x ,x  \right\rangle $ for all $x \in \mathcal{H}$. Here $\left| A \right|={{\left( {{A}^{*}}A \right)}^{\frac{1}{2}}}$ is the absolute value of $A$.

 If $A\in\mathbb{B}(\mathcal{H})$, the usual operator norm and the numerical radius of $A$ are defined, respectively, by
$$\|A\|=\sup_{\|x\|=1}\|Ax\|\;\quad{\text{and}}\quad\;w(A)=\sup_{\|x\|=1}|\left<Ax,x\right>|.$$ 
The numerical radius satisfies 
\begin{equation}\label{14}
\frac{1}{2}\left\| A \right\|\le w \left( A \right)\le \left\| A \right\|,
\end{equation}
which show that $w \left( A \right)$ is a norm equivalent to $\left\| A \right\|$.   We also remark that if $R\left( A \right)\bot R\left( {{A}^{*}} \right)$, then $w \left( A \right)=\frac{1}{2}\left\| A \right\|$ (see, e.g.,  {\cite[Theorem 1.3.4]{8}}).

An improvement of the second inequality in \eqref{14} has been given in \cite[Theorem 1]{11}. It says that for $A\in \mathbb{B}\left( \mathcal{H} \right)$, 
\begin{equation}\label{15}
w\left( A \right)\le \frac{1}{2}\left\| \left| A \right|+\left| {{A}^{*}} \right| \right\|\le \frac{1}{2}\left( \left\| A \right\|+{{\left\| {{A}^{2}} \right\|}^{\frac{1}{2}}} \right).
\end{equation}
Consequently, if ${{A}^{2}}=0$, then $w \left( A \right)=\frac{\left\| A \right\|}{2}$. The first inequality of \eqref{15} was extended in \cite{5} in the following form:
\begin{equation}\label{28}
{{w}^{r}}\left( A \right)\le \frac{1}{2}\left\| {{\left| A \right|}^{2rv}}+{{\left| {{A}^{*}} \right|}^{2r\left( 1-v \right)}} \right\|,\quad\text{ }r\ge 1,~0<v<1.
\end{equation}
Also, in the same paper, it was shown that 
\begin{equation}\label{3}
{{\left\| A+B \right\|}^{2}}\le \left\| {{\left| A \right|}^{2}}+{{\left| B \right|}^{2}} \right\|+\left\| {{\left| {{A}^{*}} \right|}^{2}}+{{\left| {{B}^{*}} \right|}^{2}} \right\|.
\end{equation}
The following result concerning the product of two operators was proved in \cite{d}:
\begin{equation}\label{29}
{{w}^{r}}\left( {{B}^{*}}A \right)\le \frac{1}{2}\left\| {{\left| A \right|}^{2r}}+{{\left| B \right|}^{2r}} \right\|,\quad\text{ }r\ge 1.
\end{equation}
A general numerical radius inequality has been proved by Shebrawi and  Albadawi \cite{1}, it has been shown
that if $A,X,B\in \mathbb{B}\left( \mathcal{H} \right)$, then
\begin{equation}\label{20}
{{w}^{r}}\left( {{A}^{*}}XB \right)\le \frac{1}{2}\left\| {{\left( {{A}^{*}}{{\left| {{X}^{*}} \right|}^{2v}}A \right)}^{r}}+{{\left( {{B}^{*}}{{\left| X \right|}^{2\left( 1-v \right)}}B \right)}^{r}} \right\|,\quad\text{ }r\ge 1,~0<v<1.
\end{equation}
Some interesting numerical radius inequalities improving inequalities \eqref{14} have been obtained by several mathematicians (see \cite{9,10}, and references therein). For a comprehensive overview of the connections among these and other known inequalities in the literature, we refer to \cite{bookd}.

The purpose of this work is to establish some new inequalities for the numerical radius of bounded linear operators in Hilbert spaces. We provide a new estimate for the sum of two operators. After that, we generalize and improve the inequality \eqref{20}. An improvement of inequality $w\left( A \right)\le \left\| A \right\|$ is also given in the end of Section 2. Section 3 devoted to studying numerical radius inequalities involving $f$-connection of operators.

\section{Inequalities for sums and products of operators}
We start this section by an operator norm inequality related to \eqref{3}. In fact we give another upper bound for ${{\left\| A+B \right\|}^{2}}$.
\begin{theorem}\label{8}
	Let $A,B\in \mathbb{B}\left( \mathcal{H} \right)$, then
	\begin{equation}\label{26}
{{\left\| A+B \right\|}^{2}}\le \frac{1}{2}\left[ \left\| {{\left| {{A}^{*}} \right|}^{2}}+{{\left| {{B}^{*}} \right|}^{2}} \right\|+\left\| {{\left| {{A}^{*}} \right|}^{2}}-{{\left| {{B}^{*}} \right|}^{2}} \right\| \right]+w\left( B{{A}^{*}} \right)+2\left\| A \right\|\left\| B \right\|.	
	\end{equation}
\end{theorem}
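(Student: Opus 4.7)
The natural starting point is $\|A+B\|^2=\|(A+B)(A+B)^*\|=\|\,|A^*|^2+|B^*|^2+BA^*+(BA^*)^*\|$; the triangle inequality for the operator norm separates this into the diagonal piece $\|\,|A^*|^2+|B^*|^2\|$ and the self-adjoint cross piece $\|BA^*+(BA^*)^*\|$, which is the clean starting bound from which everything else will follow.

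The first subgoal is a sharp estimate on the cross term. It has the shape $T+T^*$ with $T=BA^*$, and admits two natural norm bounds: $\|T+T^*\|\le 2w(T)$ (because the norm of a self-adjoint operator coincides with its numerical radius and $w(\mathrm{Re}\,T)\le w(T)$) and $\|T+T^*\|\le 2\|T\|$. The elementary fact $\min(a,b)\le\tfrac12(a+b)$ combines these into $\|T+T^*\|\le w(T)+\|T\|$; with $\|BA^*\|\le\|A\|\|B\|$, this yields the intermediate estimate
$$\|A+B\|^2\le \|\,|A^*|^2+|B^*|^2\|+w(BA^*)+\|A\|\|B\|.$$

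The second subgoal is to convert $\|\,|A^*|^2+|B^*|^2\|$ into the symmetrized expression that appears in the theorem. For positive operators $P,Q$ the identities $P+Q=(P-Q)+2Q=-(P-Q)+2P$ give $\|P+Q\|\le \|P-Q\|+2\min(\|P\|,\|Q\|)$, and $\min(\|P\|,\|Q\|)\le\sqrt{\|P\|\|Q\|}$ upgrades this to $\|P+Q\|\le \|P-Q\|+2\sqrt{\|P\|\|Q\|}$. Specializing to $P=|A^*|^2$ and $Q=|B^*|^2$ gives
$$\|\,|A^*|^2+|B^*|^2\|\le \|\,|A^*|^2-|B^*|^2\|+2\|A\|\|B\|.$$

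The final step — and the one place where the proof is not purely mechanical — is to apply this last inequality to only \emph{half} of the term $\|\,|A^*|^2+|B^*|^2\|$ in the intermediate estimate: rewrite it as $\tfrac12\|\,|A^*|^2+|B^*|^2\|+\tfrac12\|\,|A^*|^2+|B^*|^2\|$ and replace just one of the two copies by the symmetrized bound just derived. The surviving half together with the freshly produced $\tfrac12\|\,|A^*|^2-|B^*|^2\|$ reassembles the symmetrized bracket in the theorem, while the two $\|A\|\|B\|$ contributions — one from the cross-term estimate and one from this half-replacement — combine to $2\|A\|\|B\|$. I expect this ``half-and-half'' trick to be the main conceptual obstacle: a direct substitution of the whole $\|\,|A^*|^2+|B^*|^2\|$ at once would leave coefficient $1$ rather than $\tfrac12$ in front of the symmetrized bracket, so one really must keep one of the two halves untouched to match the stated constants.
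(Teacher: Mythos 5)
Your proof is correct, but it is a genuinely different argument from the one in the paper. The paper works at the level of inner products: it invokes a vector inequality of Dragomir, ${{\left| \left\langle z,x \right\rangle  \right|}^{2}}+{{\left| \left\langle z,y \right\rangle  \right|}^{2}}\le {{\left\| z \right\|}^{2}}\max \left( {{\left\| x \right\|}^{2}},{{\left\| y \right\|}^{2}} \right)+\left| \left\langle x,y \right\rangle  \right|$, applies it to $A^*y$, $B^*y$ and a unit vector $x$, converts the $\max$ via $\max\{a,b\}=\tfrac12(a+b+|a-b|)$ (which is what produces the symmetrized bracket $\tfrac12[\|\,|A^*|^2+|B^*|^2\|+\|\,|A^*|^2-|B^*|^2\|]$ directly), expands $|\langle (A+B)x,y\rangle|^2$ by the triangle inequality, and takes suprema. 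You instead work entirely at the operator level: the $C^*$-identity $\|A+B\|^2=\|(A+B)(A+B)^*\|$, the triangle inequality, the bound $\|T+T^*\|\le 2w(T)\le w(T)+\|T\|$ for $T=BA^*$, and the elementary estimate $\|P+Q\|\le\|P-Q\|+2\sqrt{\|P\|\,\|Q\|}$ for positive $P,Q$, assembled with your half-and-half splitting. Every step checks out ($\|\,|A^*|^2\|=\|A\|^2$ makes the square roots come out to $\|A\|\,\|B\|$ as needed), and your route has two advantages: it is self-contained, needing no external lemma, and your intermediate estimate
\begin{equation*}
\|A+B\|^2\le \bigl\| \left|A^*\right|^2+\left|B^*\right|^2 \bigr\| + w(BA^*)+\|A\|\,\|B\|
\end{equation*}
is in fact at least as sharp as \eqref{26}, since the inequality $\bigl\| \left|A^*\right|^2+\left|B^*\right|^2 \bigr\|\le \bigl\| \left|A^*\right|^2-\left|B^*\right|^2 \bigr\|+2\|A\|\,\|B\|$ shows the right-hand side of \eqref{26} dominates it. The paper's method, on the other hand, generalizes more readily to the numerical-radius variant stated as its corollary (obtained by setting $x=y$ before taking suprema), which your norm-level argument does not immediately give.
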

\begin{proof}
	We use the following inequality which is shown in the proof of Theorem 3 in \cite{dragomir}:
	\[{{\left| \left\langle z,x \right\rangle  \right|}^{2}}+{{\left| \left\langle z,y \right\rangle  \right|}^{2}}\le {{\left\| z \right\|}^{2}}\max \left( {{\left\| x \right\|}^{2}},{{\left\| y \right\|}^{2}} \right)+\left| \left\langle x,y \right\rangle  \right|\]
	where $x,y,z\in \mathcal{H}$. Taking $x={{A}^{*}}y$,  $y={{B}^{*}}y$, and $z=x$ with $\left\| x \right\|=\left\| y \right\|=1$, we get  
	\[{{\left| \left\langle x,{{A}^{*}}y \right\rangle  \right|}^{2}}+{{\left| \left\langle x,{{B}^{*}}y \right\rangle  \right|}^{2}}\le \max \left( {{\left\| {{A}^{*}}y \right\|}^{2}},{{\left\| {{B}^{*}}y \right\|}^{2}} \right)+\left| \left\langle {{A}^{*}}y,{{B}^{*}}y \right\rangle  \right|.\]
	The above inequality is equivalent to
	\[{{\left| \left\langle Ax,y \right\rangle  \right|}^{2}}+{{\left| \left\langle Bx,y \right\rangle  \right|}^{2}}\le \frac{1}{2}\left[ \left\langle A{{A}^{*}}+B{{B}^{*}}y,y \right\rangle +\left| \left\langle A{{A}^{*}}-B{{B}^{*}}y,y \right\rangle  \right| \right]+\left| \left\langle B{{A}^{*}}y,y \right\rangle  \right|\]
	thanks to $\max \left\{ a,b \right\}=\frac{1}{2}\left( a+b+\left| a-b \right| \right)\left( a,b\in \mathbb{R} \right)$.
	
	Now, it follows from the tringle inequality that
	\[\begin{aligned}
	& {{\left| \left\langle A+Bx,y \right\rangle  \right|}^{2}} \\ 
	& \le {{\left| \left\langle Ax,y \right\rangle  \right|}^{2}}+{{\left| \left\langle Bx,y \right\rangle  \right|}^{2}}+2\left| \left\langle Ax,y \right\rangle  \right|\left| \left\langle Bx,y \right\rangle  \right| \\ 
	& \le \frac{1}{2}\left[ \left\langle A{{A}^{*}}+B{{B}^{*}}y,y \right\rangle +\left| \left\langle A{{A}^{*}}-B{{B}^{*}}y,y \right\rangle  \right| \right]+\left| \left\langle B{{A}^{*}}y,y \right\rangle  \right|+2\left| \left\langle Ax,y \right\rangle  \right|\left| \left\langle Bx,y \right\rangle  \right|. 
	\end{aligned}\]
	By taking the supremum over $x,y\in \mathcal{H}$ with $\left\| x \right\|=\left\| y \right\|=1$, we deduce the desired result.
\end{proof}

The following examples show that there is no ordering between our inequality \eqref{26} and Kittaneh inequality \eqref{3} in general.
\begin{example}
Let $A=\left( \begin{matrix}
1 & 0  \\
-3 & 1  \\
\end{matrix} \right)$
, $B=\left( \begin{matrix}
-1 & 2  \\
0 & 1  \\
\end{matrix} \right)$. After brief computation, 
	\[{{\left\| A+B \right\|}^{2}}\approx 14.52,\]
\[\frac{1}{2}\left[ \left\| {{\left| {{A}^{*}} \right|}^{2}}+{{\left| {{B}^{*}} \right|}^{2}} \right\|+\left\| {{\left| {{A}^{*}} \right|}^{2}}-{{\left| {{B}^{*}} \right|}^{2}} \right\| \right]+w\left( B{{A}^{*}} \right)+2\left\| A \right\|\left\| B \right\|\approx 29.58,\] 
and
\[\left\| {{\left| A \right|}^{2}}+{{\left| B \right|}^{2}} \right\|+\left\| {{\left| {{A}^{*}} \right|}^{2}}+{{\left| {{B}^{*}} \right|}^{2}} \right\|\approx 25.28.\] 
Thus,
\[\begin{aligned}
 {{\left\| A+B \right\|}^{2}}&\lneqq	\left\| {{\left| A \right|}^{2}}+{{\left| B \right|}^{2}} \right\|+\left\| {{\left| {{A}^{*}} \right|}^{2}}+{{\left| {{B}^{*}} \right|}^{2}} \right\| \\ 
& \lneqq	\frac{1}{2}\left[ \left\| {{\left| {{A}^{*}} \right|}^{2}}+{{\left| {{B}^{*}} \right|}^{2}} \right\|+\left\| {{\left| {{A}^{*}} \right|}^{2}}-{{\left| {{B}^{*}} \right|}^{2}} \right\| \right]+w\left( B{{A}^{*}} \right)+2\left\| A \right\|\left\| B \right\|.  
\end{aligned}\]
\end{example}

\begin{example}
Let $A=\left( \begin{matrix}
2 & 0  \\
3 & 1  \\
\end{matrix} \right)$
, $B=\left( \begin{matrix}
0 & 1  \\
0 & 1  \\
\end{matrix} \right)$. A simple computation shows that 
	\[{{\left\| A+B \right\|}^{2}}\approx 17.94,\]
\[\frac{1}{2}\left[ \left\| {{\left| {{A}^{*}} \right|}^{2}}+{{\left| {{B}^{*}} \right|}^{2}} \right\|+\left\| {{\left| {{A}^{*}} \right|}^{2}}-{{\left| {{B}^{*}} \right|}^{2}} \right\| \right]+w\left( B{{A}^{*}} \right)+2\left\| A \right\|\left\| B \right\|\approx 25.4,\] 
and
\[\left\| {{\left| A \right|}^{2}}+{{\left| B \right|}^{2}} \right\|+\left\| {{\left| {{A}^{*}} \right|}^{2}}+{{\left| {{B}^{*}} \right|}^{2}} \right\|\approx 29.44.\] 
Thus,
\[\begin{aligned}
 {{\left\| A+B \right\|}^{2}}&\lneqq	\frac{1}{2}\left[ \left\| {{\left| {{A}^{*}} \right|}^{2}}+{{\left| {{B}^{*}} \right|}^{2}} \right\|+\left\| {{\left| {{A}^{*}} \right|}^{2}}-{{\left| {{B}^{*}} \right|}^{2}} \right\| \right]+w\left( B{{A}^{*}} \right)+2\left\| A \right\|\left\| B \right\| \\ 
& \lneqq	\left\| {{\left| A \right|}^{2}}+{{\left| B \right|}^{2}} \right\|+\left\| {{\left| {{A}^{*}} \right|}^{2}}+{{\left| {{B}^{*}} \right|}^{2}} \right\|.  
\end{aligned}\]
\end{example}

\begin{remark}
It follows from Theorem \ref{8} that
\[{{\left\| A+B \right\|}^{2}}\le \frac{1}{2}\left[ \left\| {{\left| A \right|}^{2}}+{{\left| B \right|}^{2}} \right\|+\left\| {{\left| A \right|}^{2}}-{{\left| B \right|}^{2}} \right\| \right]+w\left( B{{A}^{*}} \right)+2\left\| A \right\|\left\| B \right\|,\]
whenever $A$ and $B$ are two normal operators.
\end{remark}

Letting $x = y$ in the proof of Theorem \ref{8}, we find that:
\begin{corollary}
	Let $A,B\in \mathbb{B}\left( \mathcal{H} \right)$, then
	\[{{w}^{2}}\left( A+B \right)\le \frac{1}{2}\left[ \left\| {{\left| {{A}^{*}} \right|}^{2}}+{{\left| {{B}^{*}} \right|}^{2}} \right\|+\left\| {{\left| {{A}^{*}} \right|}^{2}}-{{\left| {{B}^{*}} \right|}^{2}} \right\| \right]+w\left( B{{A}^{*}} \right)+2w\left( A \right)w\left( B \right).\]
\end{corollary}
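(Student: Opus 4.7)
The plan is to revisit the pointwise estimate produced in the course of proving Theorem \ref{8} and simply specialize it to $x=y$. Recall that the proof yielded, for arbitrary unit vectors $x,y\in\mathcal{H}$, the inequality
\[
|\langle (A+B)x,y\rangle|^{2} \le \tfrac{1}{2}\bigl[\langle(AA^{*}+BB^{*})y,y\rangle + |\langle(AA^{*}-BB^{*})y,y\rangle|\bigr] + |\langle BA^{*}y,y\rangle| + 2|\langle Ax,y\rangle||\langle Bx,y\rangle|,
\]
which was obtained by combining the Cauchy--Schwarz-type inequality from \cite{dragomir} with the identity $\max\{a,b\}=\tfrac{1}{2}(a+b+|a-b|)$ and the triangle inequality. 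The hypothesis $\|x\|=\|y\|=1$ was used only as a normalization, so the particular choice $x=y$ is legitimate at every step and the same chain survives unchanged.

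After this substitution, the left-hand side becomes $|\langle(A+B)x,x\rangle|^{2}$, whose supremum over $\|x\|=1$ is $w^{2}(A+B)$. I would then dominate the right-hand side term-by-term, using that the supremum of a sum is at most the sum of the suprema. The positive summand $\langle(AA^{*}+BB^{*})x,x\rangle$ contributes $\||A^{*}|^{2}+|B^{*}|^{2}\|$ because $AA^{*}+BB^{*}\ge 0$; the summand $|\langle(AA^{*}-BB^{*})x,x\rangle|$ contributes $\||A^{*}|^{2}-|B^{*}|^{2}\|$, since $AA^{*}-BB^{*}$ is self-adjoint and the numerical radius of a self-adjoint operator coincides with its operator norm; the term $|\langle BA^{*}x,x\rangle|$ obviously yields $w(BA^{*})$; and the product $2|\langle Ax,x\rangle||\langle Bx,x\rangle|$ is dominated by $2w(A)w(B)$ because each factor is bounded by the numerical radius of the corresponding operator.

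Assembling the four bounds produces precisely the stated inequality. I expect no real obstacle here: all the analytic work was done in Theorem \ref{8}, and the present statement is a direct dividend of the crucial observation that, with $x=y$, the Cauchy--Schwarz factors $|\langle Ax,y\rangle|$ and $|\langle Bx,y\rangle|$ refine from $\|Ax\|\|Bx\|$-type quantities into the strictly smaller $w(A)$ and $w(B)$, which is the whole point of passing from the operator-norm inequality \eqref{26} to its numerical-radius counterpart.
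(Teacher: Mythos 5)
Your proposal is correct and coincides with the paper's own argument: the paper obtains the corollary precisely by setting $x=y$ in the pointwise inequality established in the proof of Theorem \ref{8} and then passing to suprema term by term. No further comment is needed.
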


The following lemmas are useful for generalizing and improving inequality \eqref{20}. The first lemma is known as the generalized mixed Schwarz inequality (see, e.g.,  {\cite[Theorem 1]{new}}).
\begin{lemma}\label{1}
	Let $A\in \mathbb{B}\left( \mathcal{H} \right)$ and $x,y\in \mathcal{H}$ be any vectors.
	If $f,g$ are non-negative continuous functions on $\left[ 0,\infty  \right)$ satisfying $f\left( t \right)g\left( t \right)=t,\left( t\ge 0 \right)$, then 
	$${{\left| \left\langle Ax,y \right\rangle  \right|}}\le \left\| f\left( \left| A \right| \right)x \right\|\left\| g\left( \left| {{A}^{*}} \right| \right)y \right\|.$$
\end{lemma}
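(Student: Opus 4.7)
The plan is to establish this well-known generalized mixed Schwarz inequality via the polar decomposition of $A$ together with the ordinary Cauchy--Schwarz inequality in $\mathcal{H}$.

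First I would invoke the polar decomposition $A=U|A|$, where $U\in\mathbb{B}(\mathcal{H})$ is the partial isometry with initial space $\overline{R(|A|)}$ and final space $\overline{R(A)}$, so in particular $U^*U|A|=|A|$ and $\|U\|\le 1$. From $|A^*|^2=AA^*=U|A|^2U^*$ and $U^*U|A|=|A|$ a direct computation yields $|A^*|=U|A|U^*$ and hence the intertwining identity
\[
|A|U^*=U^*|A^*|.
\]
By induction this gives $|A|^nU^*=U^*|A^*|^n$ for every $n\ge 0$, so by uniform approximation of continuous functions by polynomials on the compact spectrum of $|A|$, one obtains the functional-calculus version
\[
h(|A|)U^*=U^*h(|A^*|)
\]
for every continuous function $h$ on $[0,\|A\|]$.

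Next, since $f,g$ are non-negative continuous functions with $f(t)g(t)=t$ on $[0,\infty)$, the continuous functional calculus gives $|A|=g(|A|)f(|A|)$, with both factors self-adjoint and mutually commuting. This lets me compute
\[
\langle Ax,y\rangle=\langle Ug(|A|)f(|A|)x,y\rangle=\langle f(|A|)x,g(|A|)U^*y\rangle,
\]
and then the intertwining identity applied with $h=g$ converts $g(|A|)U^*$ into $U^*g(|A^*|)$, producing
\[
\langle Ax,y\rangle=\langle f(|A|)x,U^*g(|A^*|)y\rangle.
\]

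Finally, the Cauchy--Schwarz inequality in $\mathcal{H}$, combined with $\|U^*\|\le 1$, yields
\[
|\langle Ax,y\rangle|\le \|f(|A|)x\|\,\|U^*g(|A^*|)y\|\le \|f(|A|)x\|\,\|g(|A^*|)y\|,
\]
which is precisely the claimed bound. The only delicate point is justifying the intertwining identity for the continuous function $g$; this is immediate once $|A^*|=U|A|U^*$ is verified, via density of polynomials. Everything else is bookkeeping with the polar decomposition and one application of Cauchy--Schwarz.
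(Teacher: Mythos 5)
Your proof is correct. The paper does not prove Lemma \ref{1} at all; it quotes it as the generalized mixed Schwarz inequality and cites Kittaneh's 1988 paper, and your argument is essentially the standard one from that source: polar decomposition $A=U|A|$, the factorization $|A|=g(|A|)f(|A|)$, the intertwining relation $h(|A|)U^{*}=U^{*}h(|A^{*}|)$, and one application of Cauchy--Schwarz together with $\left\| U^{*} \right\|\le 1$. You also handle the one genuinely delicate point correctly: by keeping $U^{*}$ on a single side (rather than asserting $Uh(|A|)U^{*}=h(|A^{*}|)$, which can fail at $0$ when $U$ is not unitary because $h(0)$ need not vanish), the identity holds for constants and hence, via polynomial approximation, for all continuous $h$.
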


The second lemma is well known in the literature as the Mond--Pe\v cari\'c inequality \cite{mond}.
\begin{lemma}\label{21}
If $f$ is a convex function on a real interval $J$ containing the spectrum of the self-adjoint operator $A$, then for any unit vector $x\in \mathcal{H}$,
\begin{equation}\label{conv_inner_ineq}
f\left(\left<Ax,x\right>\right)\leq \left<f(A)x,x\right>
\end{equation}
and the reverse inequality holds if $f$ is concave. 
\end{lemma}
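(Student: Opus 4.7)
The plan is to use the supporting-line proof of Jensen's inequality, transplanted from scalars to operators via the continuous functional calculus for self-adjoint operators. The starting observation is that, since $x$ is a unit vector, the scalar $t_{0}:=\left\langle Ax,x\right\rangle $ lies in the closed convex hull of the spectrum of $A$, and hence in $J$, so $f(t_{0})$ is well-defined.

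First I would invoke convexity of $f$ on $J$ to produce a supporting affine minorant at $t_{0}$: there exists $k\in\mathbb{R}$ (any element of the subdifferential $\partial f(t_{0})$) such that
\[
f(t)\;\ge\; f(t_{0})+k(t-t_{0})\qquad\text{for every }t\in J,
\]
and, in particular, for every $t$ in the spectrum of $A$.

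Second, I would promote this scalar inequality to an operator inequality. Because the spectrum of $A$ is contained in $J$ and the continuous functional calculus is order-preserving on continuous real-valued functions defined on the spectrum, the above inequality yields
\[
f(A)\;\ge\; f(t_{0})\,I+k\bigl(A-t_{0}I\bigr)
\]
as a self-adjoint operator inequality. Taking the inner product with $x$ and using $\|x\|=1$ collapses the right-hand side to $f(t_{0})+k(\langle Ax,x\rangle-t_{0})=f(t_{0})=f(\langle Ax,x\rangle)$, which is exactly \eqref{conv_inner_ineq}. The concave case follows at once by applying the convex case to $-f$.

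The only subtlety I anticipate is the existence of the subgradient $k$: this is automatic when $t_{0}$ lies in the interior of $J$, but if $t_{0}$ coincides with an endpoint of the spectrum one must either use the appropriate one-sided derivative of $f$ (which exists by convexity) or observe that in such an extremal situation $x$ is forced to be concentrated on the corresponding spectral projection, so that both sides of \eqref{conv_inner_ineq} collapse to $f(t_{0})$ and equality holds trivially. Beyond this, the argument is entirely routine.
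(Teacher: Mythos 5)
Your proof is correct. The paper itself offers no argument for this lemma --- it is quoted as the known Mond--Pe\v{c}ari\'c inequality with a citation to \cite{mond} --- so there is nothing to compare against; what you give is the standard supporting-line proof of the operator Jensen trace/inner-product inequality, and it is sound: $t_{0}=\langle Ax,x\rangle$ lies in $[\min\sigma(A),\max\sigma(A)]\subseteq J$, the affine minorant passes through the order-preserving functional calculus, and the linear term vanishes after pairing with $x$. You are also right to flag the only delicate point, namely when $t_{0}$ is an endpoint of $J$ and the one-sided derivative may be infinite (e.g.\ $f(t)=-\sqrt{t}$ at $0$ for the concave case); your observation that $\langle (A-t_{0}I)x,x\rangle=0$ together with the sign of $A-t_{0}I$ forces $(A-t_{0}I)x=0$, so that $x$ is an eigenvector and both sides equal $f(t_{0})$, disposes of that case completely. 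The only cosmetic caveat is that $f$ should be taken continuous (as it automatically is on the interior of $J$) for $f(A)$ to be defined by the continuous functional calculus, a hypothesis the paper leaves implicit as well.
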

The third lemma is a direct consequence of \cite[Theorem 2.3]{aujla}.
\begin{lemma}\label{18}
	Let $f$ be a non-negative non-decreasing convex function on $\left[ 0,\infty  \right)$ and let $A,B\in \mathbb{B}\left( \mathcal{H} \right)$ be positive operators. Then for any $0<v<1$,
\[\left\| f(\left( 1-v \right)A+vB) \right\|\le \left\| \left( 1-v \right)f\left( A \right)+vf\left( B \right) \right\|.\]
\end{lemma}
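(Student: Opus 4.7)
My plan is to derive the inequality as a direct specialization of Theorem 2.3 in \cite{aujla}. That theorem asserts, for any non-negative non-decreasing convex function $f$ on $[0,\infty)$, positive operators $A,B\in\mathbb{B}(\mathcal{H})$, and $0<v<1$, the unitarily-invariant-norm inequality
\[
|||f((1-v)A+vB)|||\le|||(1-v)f(A)+vf(B)|||.
\]
Since the operator norm $\|\cdot\|$ equals the largest singular value and is therefore unitarily invariant, the statement of the lemma is the particular case $|||\cdot|||=\|\cdot\|$, and no additional calculation is required once the Aujla--Silva result is invoked.

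I would not attempt a standalone argument, because the inequality is genuinely a weak-majorization fact rather than an operator-order inequality. Convexity alone (as opposed to operator convexity) does \emph{not} yield $f((1-v)A+vB)\le(1-v)f(A)+vf(B)$ in the positive-semidefinite order---the function $f(t)=t^{3}$ on $[0,\infty)$ is a standard counterexample, being convex but not operator convex. Nor does the natural integral representation $f(t)=\alpha+\beta t+\int(t-s)_{+}\,d\mu(s)$ salvage an operator-order proof, because the truncations $(t-s)_{+}$ themselves fail to be operator convex: a short $2\times2$ check shows that $Y\ge X$ together with $Y\ge 0$ does \emph{not} imply $Y\ge X_{+}$, so one loses the inequality already on the building blocks.

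The actual proof in \cite{aujla} therefore proceeds at the level of singular values, typically via antisymmetric tensor products combined with Ky Fan's dominance theorem, and yields the inequality simultaneously for every unitarily invariant norm. This machinery is the main technical obstacle, but it is entirely absorbed in the cited reference; the only point needed to recover the lemma as stated is the elementary observation that the operator norm $\|\cdot\|$ belongs to the class of unitarily invariant norms.
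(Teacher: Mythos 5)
Your proposal matches the paper exactly: the authors state this lemma as a direct consequence of \cite[Theorem 2.3]{aujla} and give no further argument, just as you specialize that unitarily-invariant-norm result to the operator norm. Your added remarks on why convexity alone cannot give the operator-order version are correct and explain why the citation (rather than a pointwise argument) is genuinely needed.
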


The above three lemmas admit the following more general result.
\begin{proposition}\label{30}
	Let $A,B,X\in \mathbb{B}\left( \mathcal{H} \right)$, and let $f$ and $g$ be non-negative functions on $\left[ 0,\infty  \right)$ which are continuous and satisfy the relation $f\left( t \right)g\left( t \right)=t$ for all $t\in \left[ 0,\infty  \right)$. If $h$ is a non-negative increasing convex function on $\left[ 0,\infty  \right)$, then for any $0<v<1$
	\begin{equation}\label{13}
h\left( {{w}^{2}}\left( {{A}^{*}}XB \right) \right)\le \left\| \left( 1-v \right)h\left( {{\left( {{B}^{*}}{{f}^{2}}\left( \left| X \right| \right)B \right)}^{\frac{1}{1-v}}} \right)+vh\left( {{\left( {{A}^{*}}{{g}^{2}}\left( \left| {{X}^{*}} \right| \right)A \right)}^{\frac{1}{v}}} \right) \right\|.	
	\end{equation}
	In particular, 
	\begin{equation}\label{24}
{{w}^{2r}}\left( {{A}^{*}}XB \right)\le \frac{1}{2}\left\| {{\left( {{B}^{*}}{{f}^{2}}\left( \left| X \right| \right)B \right)}^{2r}}+{{\left( {{A}^{*}}{{g}^{2}}\left( \left| {{X}^{*}} \right| \right)A \right)}^{2r}} \right\|	
	\end{equation}
	for all $r\ge 1$.
\end{proposition}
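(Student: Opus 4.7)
The plan is to chain Lemmas \ref{1}, \ref{21}, and \ref{18} in that exact order, with each lemma playing a distinct role: Lemma \ref{1} controls $|\langle A^*XBx,x\rangle|^2$ pointwise, Lemma \ref{21} promotes scalar powers of quadratic forms into quadratic forms of operator powers, and Lemma \ref{18} distributes the convex function $h$ across the resulting convex combination of operators.

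Set $P:=B^*f^2(|X|)B$ and $Q:=A^*g^2(|X^*|)A$; both are positive. Writing $\langle A^*XBx,x\rangle=\langle X(Bx),Ax\rangle$ and invoking Lemma \ref{1} gives
\[
|\langle A^*XBx,x\rangle|^2\le\|f(|X|)Bx\|^2\,\|g(|X^*|)Ax\|^2=\langle Px,x\rangle\langle Qx,x\rangle.
\]
A scalar Young inequality $ab\le(1-v)a^{1/(1-v)}+v\,b^{1/v}$ then yields
\[
\langle Px,x\rangle\langle Qx,x\rangle\le(1-v)\langle Px,x\rangle^{1/(1-v)}+v\langle Qx,x\rangle^{1/v}.
\]
Since $1/(1-v),\,1/v\ge 1$, the maps $t\mapsto t^{1/(1-v)}$ and $t\mapsto t^{1/v}$ are convex on $[0,\infty)$, so two applications of Lemma \ref{21} upgrade the scalar powers to $\langle P^{1/(1-v)}x,x\rangle$ and $\langle Q^{1/v}x,x\rangle$, respectively. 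Combining and taking $\sup_{\|x\|=1}$ gives $w^2(A^*XB)\le\|(1-v)P^{1/(1-v)}+vQ^{1/v}\|$.

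To bring $h$ into play, I would use that for a non-negative non-decreasing continuous $h$ and a positive operator $T$, the spectral mapping theorem yields $\|h(T)\|=h(\|T\|)$ (since $\sigma(h(T))=h(\sigma(T))\subseteq[h(0),h(\|T\|)]$ and $h$ is non-decreasing and non-negative). Combined with the monotonicity of $h$, this turns the previous display into
\[
h\bigl(w^2(A^*XB)\bigr)\le h\bigl(\|(1-v)P^{1/(1-v)}+vQ^{1/v}\|\bigr)=\|h((1-v)P^{1/(1-v)}+vQ^{1/v})\|.
\]
Lemma \ref{18}, applied to the positive operators $P^{1/(1-v)}$ and $Q^{1/v}$ and the non-negative non-decreasing convex $h$, then bounds the right-hand side by $\|(1-v)h(P^{1/(1-v)})+v\,h(Q^{1/v})\|$, which is exactly \eqref{13}. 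The special case \eqref{24} follows by setting $v=1/2$ and $h(t)=t^r$ for $r\ge 1$, which is non-negative, non-decreasing, and convex on $[0,\infty)$.

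The main hazard is keeping the convexity/monotonicity hypotheses aligned with the exponent gymnastics: one must check that after Young's inequality both exponents $1/(1-v)$ and $1/v$ exceed $1$, so that $t\mapsto t^{1/(1-v)}$ and $t\mapsto t^{1/v}$ really are convex and Lemma \ref{21} goes in the right direction; and the hypotheses on $h$ (non-negative, non-decreasing, convex) must all be used at distinct moments (monotonicity to pass $h$ through the inequality and through the norm, convexity to invoke Lemma \ref{18}). Once this bookkeeping is in place, the chain of estimates is essentially mechanical.
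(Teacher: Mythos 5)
Your proof is correct and follows essentially the same route as the paper: both pass through the intermediate bound $w^2(A^*XB)\le\bigl\|(1-v)\bigl(B^*f^2(|X|)B\bigr)^{\frac{1}{1-v}}+v\bigl(A^*g^2(|X^*|)A\bigr)^{\frac{1}{v}}\bigr\|$ via the mixed Schwarz inequality, a Mond--Pe\v{c}ari\'c step, and Young/AM--GM, then finish with $h(\|T\|)=\|h(T)\|$ and Lemma \ref{18}. The only (harmless) difference is a transposition of two middle steps: the paper applies Mond--Pe\v{c}ari\'c with the concave functions $t^{1-v},t^{v}$ to the operators $P^{1/(1-v)},Q^{1/v}$ and then the weighted AM--GM inequality, whereas you apply scalar Young first and then Mond--Pe\v{c}ari\'c with the convex functions $t^{1/(1-v)},t^{1/v}$.
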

\begin{proof}
For any unit vector $x\in \mathcal{H}$, we have
\begin{align}
\nonumber {{\left| \left\langle {{A}^{*}}XBx,x \right\rangle  \right|}^{2}}&={{\left| \left\langle XBx,Ax \right\rangle  \right|}^{2}} \\ 
& \le \left\langle {{B}^{*}}{{f}^{2}}\left( \left| X \right| \right)Bx,x \right\rangle \left\langle {{A}^{*}}{{g}^{2}}\left( \left| {{X}^{*}} \right| \right)Ax,x \right\rangle  \label{9}\\ 
& =\left\langle {{\left( {{\left( {{B}^{*}}{{f}^{2}}\left( \left| X \right| \right)B \right)}^{\frac{1}{1-v}}} \right)}^{1-v}}x,x \right\rangle \left\langle {{\left( {{\left( {{A}^{*}}{{g}^{2}}\left( \left| {{X}^{*}} \right| \right)A \right)}^{\frac{1}{v}}} \right)}^{v}}x,x \right\rangle \nonumber \\ 
& \le {{\left\langle {{\left( {{B}^{*}}{{f}^{2}}\left( \left| X \right| \right)B \right)}^{\frac{1}{1-v}}}x,x \right\rangle }^{1-v}}{{\left\langle {{\left( {{A}^{*}}{{g}^{2}}\left( \left| {{X}^{*}} \right| \right)A \right)}^{\frac{1}{v}}}x,x \right\rangle }^{v}} \label{10} \\ 
& \le \left( 1-v \right)\left\langle {{\left( {{B}^{*}}{{f}^{2}}\left( \left| X \right| \right)B \right)}^{\frac{1}{1-v}}}x,x \right\rangle +v\left\langle {{\left( {{A}^{*}}{{g}^{2}}\left( \left| {{X}^{*}} \right| \right)A \right)}^{\frac{1}{v}}}x,x \right\rangle  \label{16}\\ 
& =\left\langle \left( 1-v \right){{\left( {{B}^{*}}{{f}^{2}}\left( \left| X \right| \right)B \right)}^{\frac{1}{1-v}}}+v{{\left( {{A}^{*}}{{g}^{2}}\left( \left| {{X}^{*}} \right| \right)A \right)}^{\frac{1}{v}}}x,x \right\rangle \nonumber  
\end{align}
where \eqref{9} follows from Lemma \ref{1}, \eqref{10} follows from Mond--Pe\v cari\'c inequality for concave function $f\left( t \right)={{t}^{v}}\left( 0<v<1 \right)$, and the weighted arithmetic-geometric mean inequality implies \eqref{16}.

Taking the supremum over $x\in \mathcal{H}$ with $\left\| x \right\|=1$, we infer that
\[{{w}^{2}}\left( {{A}^{*}}XB \right)\le \left\| \left( 1-v \right){{\left( {{B}^{*}}{{f}^{2}}\left( \left| X \right| \right)B \right)}^{\frac{1}{1-v}}}+v{{\left( {{A}^{*}}{{g}^{2}}\left( \left| {{X}^{*}} \right| \right)A \right)}^{\frac{1}{v}}} \right\|.\]
On account of assumptions on $h$, we can write
\begin{align}
\nonumber h\left( {{w}^{2}}\left( {{A}^{*}}XB \right) \right)&\le h\left( \left\| \left( 1-v \right){{\left( {{B}^{*}}{{f}^{2}}\left( \left| X \right| \right)B \right)}^{\frac{1}{1-v}}}+v{{\left( {{A}^{*}}{{g}^{2}}\left( \left| {{X}^{*}} \right| \right)A \right)}^{\frac{1}{v}}} \right\| \right) \\ 
& =\left\| h\left( \left( 1-v \right){{\left( {{B}^{*}}{{f}^{2}}\left( \left| X \right| \right)B \right)}^{\frac{1}{1-v}}}+v{{\left( {{A}^{*}}{{g}^{2}}\left( \left| {{X}^{*}} \right| \right)A \right)}^{\frac{1}{v}}} \right) \right\| \nonumber\\ 
& \le \left\| \left( 1-v \right)h\left( {{\left( {{B}^{*}}{{f}^{2}}\left( \left| X \right| \right)B \right)}^{\frac{1}{1-v}}} \right)+vh\left( {{\left( {{A}^{*}}{{g}^{2}}\left( \left| {{X}^{*}} \right| \right)A \right)}^{\frac{1}{v}}} \right) \right\|  \label{19}
\end{align}
where \eqref{19} follows from Lemma \ref{18}.

The inequality \eqref{24} follows directly from \eqref{13} by taking $h\left( t \right)={{t}^{r}}\left( r\ge 1 \right)$ and $v=\frac{1}{2}$.
\end{proof}

Our aim in the next result is to improve \eqref{20} under some mild conditions. To do this end, we need the following refinement of arithmetic-geometric mean inequality \cite{moradi1,moradi2}.
\begin{lemma}\label{4}
	Suppose that $a,b>0$ and positive real numbers $m$, $M$ satisfy $\min \left\{ a,b \right\}\le m<M\le \max \left\{ a,b \right\}$. 	Then
	\[\frac{M+m}{2\sqrt{Mm}}\sqrt{ab}\le \frac{a+b}{2}.\]
\end{lemma}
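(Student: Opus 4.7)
The plan is to rewrite the claim in a scale-invariant form and reduce it to a one-variable monotonicity statement. Multiplying both sides by $\frac{2}{\sqrt{ab}}$, the inequality is equivalent to
\[\frac{M+m}{\sqrt{Mm}}\le \frac{a+b}{\sqrt{ab}},\]
so the content of the lemma is really that the function $(x,y)\mapsto \frac{x+y}{\sqrt{xy}}$ gets larger as the pair $(x,y)$ moves \emph{outward} from $(m,M)$ to $(\min\{a,b\},\max\{a,b\})$. Note that $\frac{x+y}{\sqrt{xy}} = \sqrt{x/y}+\sqrt{y/x}$ depends only on the ratio of the two arguments.

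Assume without loss of generality that $a\le b$, so that by hypothesis $a\le m<M\le b$. Introduce $\varphi(t)=t+\tfrac{1}{t}$ for $t\ge 1$, which I would verify once and for all is strictly increasing on $[1,\infty)$ (immediate from $\varphi'(t)=1-t^{-2}\ge 0$). Setting $t_1=\sqrt{b/a}$ and $t_2=\sqrt{M/m}$, both are $\ge 1$, and the target inequality reads $\varphi(t_2)\le \varphi(t_1)$, which by monotonicity of $\varphi$ on $[1,\infty)$ reduces to the single inequality $t_2\le t_1$, i.e.\ $\frac{M}{m}\le \frac{b}{a}$, or equivalently $aM\le bm$.

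The remaining step—which is the only place the pinching hypothesis $a\le m<M\le b$ is used—is the clean observation that $b\ge M>0$ and $m\ge a>0$ multiply to give $bm\ge Ma$. This is the main (and essentially only) obstacle: one must be careful to invoke the two sandwich inequalities on the \emph{correct} sides, since a naive pairing like $a\le m$ with $M\le b$ in the wrong sense would give the useless bound $aM\le mb$ in the wrong direction. Once $aM\le bm$ is in hand, reversing the chain of equivalences above yields the stated inequality, completing the proof.
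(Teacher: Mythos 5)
Your proof is correct and takes essentially the same approach as the paper: both reduce the inequality to a monotonicity statement for a one-variable function of the ratio on $[1,\infty)$ (your $\varphi(t)=t+1/t$ at $t=\sqrt{x}$ is just $2/f(x)$ for the paper's $f(x)=\frac{2\sqrt{x}}{1+x}$). You are somewhat more explicit than the paper in verifying the key ratio comparison $M/m\le \max\{a,b\}/\min\{a,b\}$, which the paper leaves implicit.
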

\begin{proof}
Consider $f\left( x \right)=\frac{2\sqrt{x}}{1+x}$ on $\left( 1\le  \right)\frac{M}{m}\le x$. Since $f'\left( x \right)=\frac{1-x}{\sqrt{x}{{\left( x+1 \right)}^{2}}}\le 0,\left( x\ge 1 \right)$ we get $f\left( x \right)\le f\left( \frac{M}{m} \right)$, which implies the result by a simple calculation.  
\end{proof}
\begin{theorem}\label{23}
	Let $A,B,X\in \mathbb{B}\left( \mathcal{H} \right)$, $f$ and $g$ be non-negative functions on $\left[ 0,\infty  \right)$ which are continuous and satisfy the relation $f\left( t \right)g\left( t \right)=t$ for all $t\in \left[ 0,\infty  \right)$, and let $h$ be a non-negative increasing convex function on $\left[ 0,\infty  \right)$. If
	\begin{equation*}
	0<{{B}^{*}}{{f}^{2}}\left( \left| X \right| \right)B\le m<M\le {{A}^{*}}{{g}^{2}}\left( \left| {{X}^{*}} \right| \right)A
	\end{equation*}
	or
	\[0< {{A}^{*}}{{g}^{2}}\left( \left| {{X}^{*}} \right| \right)A\le m<M\le {{B}^{*}}{{f}^{2}}\left( \left| X \right| \right)B,\]
	then   
	\begin{equation}\label{5}
h\left( w\left( {{A}^{*}}XB \right) \right)\le \frac{\sqrt{Mm}}{M+m}\left\| h\left( {{B}^{*}}{{f}^{2}}\left( \left| X \right| \right)B \right)+h\left( {{A}^{*}}{{g}^{2}}\left( \left| {{X}^{*}} \right| \right)A \right) \right\|.
	\end{equation} 
\end{theorem}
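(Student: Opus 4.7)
The plan is to revisit the $v=\tfrac{1}{2}$ case of the proof of Proposition \ref{30} and replace the standard arithmetic--geometric mean step by the sharper Lemma \ref{4}. Fix a unit vector $x\in\mathcal{H}$ and set $T=B^{*}f^{2}(|X|)B$, $S=A^{*}g^{2}(|X^{*}|)A$, $a=\langle Tx,x\rangle$, $b=\langle Sx,x\rangle$. The generalized mixed Schwarz inequality (Lemma \ref{1}) yields
\[
|\langle A^{*}XBx,x\rangle|^{2}=|\langle XBx,Ax\rangle|^{2}\le ab.
\]
The operator assumption $0<T\le m<M\le S$ (or its reverse) is inherited by the inner products against unit $x$, so $\min\{a,b\}\le m<M\le\max\{a,b\}$, and Lemma \ref{4} applies to give
\[
|\langle A^{*}XBx,x\rangle|\le\sqrt{ab}\le\frac{\sqrt{Mm}}{M+m}(a+b).
\]

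Put $\beta=\frac{\sqrt{Mm}}{M+m}\le\tfrac{1}{2}$. The next task is to push the increasing convex function $h$ past this estimate. Writing $\beta(a+b)=\beta a+\beta b+(1-2\beta)\cdot 0$ as a genuine convex combination of $a$, $b$ and $0$, the scalar convexity of $h$ (together with $h(0)=0$, the standing normalization needed here, satisfied by the target family $h(t)=t^{r}$, $r\ge 1$) produces
\[
h\!\left(\frac{\sqrt{Mm}}{M+m}(a+b)\right)\le\beta\, h(a)+\beta\, h(b).
\]
The Mond--Pe\v{c}ari\'{c} inequality (Lemma \ref{21}) applied to the convex $h$ and the positive operators $T$, $S$ then gives $h(a)\le\langle h(T)x,x\rangle$ and $h(b)\le\langle h(S)x,x\rangle$, so that
\[
h(|\langle A^{*}XBx,x\rangle|)\le\beta\,\langle(h(T)+h(S))x,x\rangle.
\]

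Finally, I would take the supremum over unit $x$. The continuity and monotonicity of $h$ let the supremum commute with $h$ on the left, while the positivity of $h(T)+h(S)$ identifies $\sup_{\|x\|=1}\langle(h(T)+h(S))x,x\rangle$ with $\|h(T)+h(S)\|$, producing \eqref{5}. The only real obstacle is recognizing where the refinement enters: it is precisely the scalar step $\sqrt{ab}\le\beta(a+b)$ supplied by Lemma \ref{4} that replaces the classical factor $\tfrac{1}{2}$ by $\frac{\sqrt{Mm}}{M+m}$; the remainder -- mixed Schwarz, Mond--Pe\v{c}ari\'{c}, and the convex-combination trick to absorb $\beta$ inside $h$ -- follows the template of Proposition \ref{30}.
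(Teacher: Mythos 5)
Your proof is correct, and its first half coincides with the paper's: the generalized mixed Schwarz inequality (Lemma \ref{1}) followed by Lemma \ref{4} applied to $a=\langle B^{*}f^{2}(|X|)Bx,x\rangle$ and $b=\langle A^{*}g^{2}(|X^{*}|)Ax,x\rangle$, whose hypotheses are indeed inherited from the operator order as you say. Where you diverge is in how the convex function $h$ is pushed through. The paper first takes the supremum to obtain the intermediate norm inequality $w(A^{*}XB)\le\frac{\sqrt{Mm}}{M+m}\left\| B^{*}f^{2}(|X|)B+A^{*}g^{2}(|X^{*}|)A\right\|$, then applies $h$ at the norm level, using $h(\alpha t)\le\alpha h(t)$ for $\alpha\le 1$ (step \eqref{2}) and the Aujla--Silva norm inequality of Lemma \ref{18} (step \eqref{17}). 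You instead stay at the level of quadratic forms: you absorb the constant $\beta=\frac{\sqrt{Mm}}{M+m}$ by writing $\beta(a+b)$ as a convex combination of $a$, $b$ and $0$, and then invoke the Mond--Pe\v{c}ari\'{c} inequality (Lemma \ref{21}) in place of Lemma \ref{18}, taking the supremum only at the very end. Both routes require the normalization $h(0)=0$, which the theorem does not state: the paper uses it silently in the claim $h(\alpha t)\le\alpha h(t)$ (false for, say, $h\equiv 1$), while you flag it explicitly --- a point in your favour. What each approach buys: yours is more elementary in that it dispenses with Lemma \ref{18} entirely and needs only scalar convexity plus Lemma \ref{21}; the paper's version isolates the norm inequality for $w(A^{*}XB)$ itself before $h$ enters, which is of independent interest, and by applying the increasing $h$ to an already-formed supremum it sidesteps the (harmless, but worth noting) continuity argument you need to commute $h$ with the supremum on the left-hand side.
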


\begin{proof}
	It follows from Lemma \ref{1} that
	\begin{equation}\label{6}
\left| \left\langle {{A}^{*}}XBx,x \right\rangle  \right|\le \sqrt{\left\langle {{B}^{*}}{{f}^{2}}\left( \left| X \right| \right)Bx,x \right\rangle \left\langle {{A}^{*}}{{g}^{2}}\left( \left| {{X}^{*}} \right| \right)Ax,x \right\rangle }.
	\end{equation}
Lemma \ref{4} ensures that
	\begin{equation}\label{7}
	\begin{aligned}
	& \sqrt{\left\langle {{B}^{*}}{{f}^{2}}\left( \left| X \right| \right)Bx,x \right\rangle \left\langle {{A}^{*}}{{g}^{2}}\left( \left| {{X}^{*}} \right| \right)Ax,x \right\rangle } \\ 
	& \le \frac{\sqrt{Mm}}{M+m}\left( \left\langle {{B}^{*}}{{f}^{2}}\left( \left| X \right| \right)Bx,x \right\rangle +\left\langle {{A}^{*}}{{g}^{2}}\left( \left| {{X}^{*}} \right| \right)Ax,x \right\rangle  \right) \\ 
	& =\frac{\sqrt{Mm}}{M+m}\left\langle {{B}^{*}}{{f}^{2}}\left( \left| X \right| \right)B+{{A}^{*}}{{g}^{2}}\left( \left| {{X}^{*}} \right| \right)Ax,x \right\rangle.
	\end{aligned}
	\end{equation}
	Combining \eqref{6} and \eqref{7}, we get
	\[\left| \left\langle {{A}^{*}}XBx,x \right\rangle  \right|\le \frac{\sqrt{Mm}}{M+m}\left\langle {{B}^{*}}{{f}^{2}}\left( \left| X \right| \right)B+{{A}^{*}}{{g}^{2}}\left( \left| {{X}^{*}} \right| \right)Ax,x \right\rangle .\]
	Taking the supremum over $x\in \mathcal{H}$ with $\left\| x \right\|=1$, we infer that
	\[w\left( {{A}^{*}}XB \right)\le \frac{\sqrt{Mm}}{M+m}\left\| {{B}^{*}}{{f}^{2}}\left( \left| X \right| \right)B+{{A}^{*}}{{g}^{2}}\left( \left| {{X}^{*}} \right| \right)A \right\|.\]
	Now, since $h$ is a non-negative increasing convex function, we have
	\begin{align}
	h\left( w\left( {{A}^{*}}XB \right) \right)&\le h\left( \frac{2\sqrt{Mm}}{M+m}\left\| \frac{{{B}^{*}}{{f}^{2}}\left( \left| X \right| \right)B+{{A}^{*}}{{g}^{2}}\left( \left| {{X}^{*}} \right| \right)A}{2} \right\| \right) \nonumber\\ 
	& \le \frac{2\sqrt{Mm}}{M+m}h\left( \left\| \frac{{{B}^{*}}{{f}^{2}}\left( \left| X \right| \right)B+{{A}^{*}}{{g}^{2}}\left( \left| {{X}^{*}} \right| \right)A}{2} \right\| \right) \label{2}\\ 
	& = \frac{2\sqrt{Mm}}{M+m}\left\| h\left( \frac{{{B}^{*}}{{f}^{2}}\left( \left| X \right| \right)B+{{A}^{*}}{{g}^{2}}\left( \left| {{X}^{*}} \right| \right)A}{2} \right) \right\| \nonumber\\ 
	& \le \frac{\sqrt{Mm}}{M+m}\left\| h\left( {{B}^{*}}{{f}^{2}}\left( \left| X \right| \right)B \right)+h\left( {{A}^{*}}{{g}^{2}}\left( \left| {{X}^{*}} \right| \right)A \right) \right\| \label{17} 
	\end{align}
	where the inequality \eqref{2} follows from the fact if $f$ is non-negative convex function and $\alpha \le 1$, then $f\left( \alpha t \right)\le \alpha f\left( t \right)$ (of course, $\frac{2\sqrt{Mm}}{M+m}\le 1$),  and the inequality \eqref{17} is due to Lemma \ref{18}.
\end{proof}

\begin{remark}\label{remark2.2}
Following \eqref{5} we list here some particular inequalities of interest.
	\begin{itemize}
		\item If $r\ge 1$ and $0\le v\le 1$, then
		\[{{w}^{r}}\left( {{A}^{*}}XB \right)\le \frac{\sqrt{Mm}}{M+m}\left\| {{\left( {{B}^{*}}{{\left| X \right|}^{2\left( 1-v \right)}}B \right)}^{r}}+{{\left( {{A}^{*}}{{\left| {{X}^{*}} \right|}^{2v}}A \right)}^{r}} \right\|\]
		whenever $0<{{B}^{*}}{{\left| X \right|}^{2\left( 1-v \right)}}B\le m<M\le {{A}^{*}}{{\left| {{X}^{*}} \right|}^{2v}}A$ or $0< {{A}^{*}}{{\left| {{X}^{*}} \right|}^{2v}}A\le m<M\le {{B}^{*}}{{\left| X \right|}^{2\left( 1-v \right)}}B$.

		The above inequality improves \eqref{20}.  
		\item If $r\ge 1$ and $0\le v\le 1$, then
		\[{{w}^{r}}\left( X \right)\le \frac{\sqrt{Mm}}{M+m}\left\| {{\left| X \right|}^{2r\left( 1-v \right)}}+{{\left| {{X}^{*}} \right|}^{2rv}} \right\|\]
		whenever $0<{{\left| X \right|}^{2\left( 1-v \right)}}\le m<M\le {{\left| {{X}^{*}} \right|}^{2v}}$ or $0< {{\left| {{X}^{*}} \right|}^{2v}}\le m<M\le {{\left| X \right|}^{2\left( 1-v \right)}}$.
		
		The above inequality improves \eqref{28}.
		\item If $r\ge 1$, then
		\[{{w}^{r}}\left( {{A}^{*}}B \right)\le \frac{\sqrt{Mm}}{M+m}\left\| {{\left| B \right|}^{2r}}+{{\left| A \right|}^{2r}} \right\|.\]
		whenever $0< {{\left| B \right|}^{2}}\le m<M\le {{\left| A \right|}^{2}}$ or $0<{{\left| A \right|}^{2}}\le m<M\le {{\left| B \right|}^{2}}$.
		
		The above inequality improves \eqref{29}.
	\end{itemize}
\end{remark}

We can show a similar improvement with different condition for $A^*g^2\left(\vert X \vert \right) A$ and $B^*f^2\left(\vert X \vert \right) B$.
Recall that the weighted operator arithmetic mean ${{\nabla }_{v}}$ and geometric mean ${{\sharp}_{v}}$, for $0<v<1$, positive invertible operator $A$, and positive operator $B$, are defined as follows:
\[A{{\nabla }_{v}}B=\left( 1-v \right)A+vB\quad\text{ and }\quad A{{\sharp}_{v}}B={{A}^{\frac{1}{2}}}\left( {{A}^{-\frac{1}{2}}}B{{A}^{-\frac{1}{2}}} \right)^v{{A}^{\frac{1}{2}}}.\]
If $v=\frac{1}{2}$, we denote the arithmetic and geometric means, respectively, by $\nabla $ and $\sharp$.
\begin{theorem}
Let $A,B,X\in \mathbb{B}\left( \mathcal{H} \right)$, $f$ and $g$ be non-negative functions on $\left[ 0,\infty  \right)$ which are continuous and satisfy the relation $f\left( t \right)g\left( t \right)=t$ for all $t\in \left[ 0,\infty  \right)$, and let $h$ be a non-negative increasing convex function on $\left[ 0,\infty  \right)$. If for given $m',M' >0$,
$$0<m' \leq B^*f^2\left(\vert X \vert \right) B \leq A^*g^2\left(\vert X \vert \right) A \leq M'$$
or
$$0<m' \leq A^*g^2\left(\vert X \vert \right) A  \leq  B^*f^2\left(\vert X \vert \right) B\leq M',$$
then
\[\,h\left( {\omega \left( {{A^*}XB} \right)} \right) \le \frac{1}{{2\gamma }}\left\| {h\left( {{B^*}{f^2}\left( {\left| X \right|} \right)B} \right) + h\left( {{A^*}{g^2}\left( {\left| {{X^*}} \right|} \right)A} \right)} \right\|,\]
where $\gamma : = {\left( {1 - \dfrac{1}{8}{{\left( {1 - \dfrac{1}{{h'}}} \right)}^2}} \right)^{ - 1}} \ge 1$ with $h' = \frac{M'}{m'}$.
\end{theorem}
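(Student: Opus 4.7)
The plan is to adapt the proof of Theorem~\ref{23} to the new hypothesis that $S:=B^*f^2(|X|)B$ and $T:=A^*g^2(|X^*|)A$ both lie in the interval $[m',M']$, rather than being separated by it. As in that proof, the backbone consists of the generalized mixed Schwarz inequality (Lemma~\ref{1}), a scalar refinement of the arithmetic--geometric mean inequality, and the norm/convex-function interaction supplied by Lemma~\ref{18}.

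Fixing a unit vector $x\in\mathcal{H}$, Lemma~\ref{1} gives
\[|\langle A^*XBx,x\rangle|\le \sqrt{\langle Sx,x\rangle\langle Tx,x\rangle},\]
and the hypothesis forces the scalars $a:=\langle Sx,x\rangle$ and $b:=\langle Tx,x\rangle$ into $[m',M']$. The decisive step is to establish a Kantorovich-style sharpening of the arithmetic--geometric mean inequality of the form
\[\sqrt{ab}\le \frac{1}{\gamma}\cdot\frac{a+b}{2}\qquad\text{for } a,b\in[m',M'],\]
where $\gamma=\bigl(1-\tfrac{1}{8}(1-1/h')^2\bigr)^{-1}$. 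I would try to derive this from the identity $\tfrac{a+b}{2}-\sqrt{ab}=(a-b)^2/\bigl(2(\sqrt{a}+\sqrt{b})^2\bigr)$ together with the uniform bound $(\sqrt{a}+\sqrt{b})^2\le 4M'$ and a lower estimate on $(a-b)^2/(a+b)^2$ controlled by $h'$, which is precisely the place where the factor $\tfrac{1}{8}(1-1/h')^2$ should crystallise.

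Once this pointwise estimate is in place, taking the supremum over unit $x$ gives $w(A^*XB)\le \frac{1}{\gamma}\left\|\frac{S+T}{2}\right\|$. Because $\frac{1}{\gamma}\le 1$ and $h$ is non-negative and convex, the property $h(\alpha t)\le\alpha h(t)$ valid for $\alpha\in[0,1]$ allows $\frac{1}{\gamma}$ to be pulled outside $h$; Lemma~\ref{18} then lets $h$ commute with the operator norm, and a second application of Lemma~\ref{18} replaces $h((S+T)/2)$ by $(h(S)+h(T))/2$. Chaining these steps gives the desired bound
\[h(w(A^*XB))\le \frac{1}{2\gamma}\|h(S)+h(T)\|.\]

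The principal obstacle is extracting the refined scalar AM--GM inequality with exactly the constant $\gamma$: the $\tfrac{1}{8}(1-1/h')^2$ correction has to emerge cleanly from the interplay between the bookkeeping on $(a-b)^2$ and the uniform spread $h'=M'/m'$, and it must then survive unchanged through the subsequent application of $h$ and the operator norm. This calibration, rather than the soft operator-theoretic machinery, is the technical heart of the argument.
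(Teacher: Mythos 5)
Your overall skeleton --- Lemma \ref{1}, a refined scalar arithmetic--geometric mean inequality on $[m',M']$, the supremum over unit vectors, and then $h(\alpha t)\le \alpha h(t)$ together with $h(\left\| Y \right\|)=\left\| h(Y) \right\|$ and Lemma \ref{18} --- is exactly the route the paper takes; its proof literally reduces to ``applying this inequality with a similar argument as in Theorem \ref{23}.'' The genuine gap is the one step you yourself flag as the technical heart: you never establish the scalar estimate $\gamma\sqrt{ab}\le \frac{a+b}{2}$, and the derivation you sketch cannot succeed. From the identity $\frac{a+b}{2}-\sqrt{ab}=\frac{(a-b)^2}{2(\sqrt{a}+\sqrt{b})^2}$ you would need a uniform positive lower bound on $(a-b)^2$ for $a=\left\langle B^*f^2(|X|)Bx,x\right\rangle$ and $b=\left\langle A^*g^2(|X^*|)Ax,x\right\rangle$; but the hypotheses only confine both numbers to $[m',M']$ (the operator ordering gives $a\le b$ or $b\le a$, nothing more), and they may coincide. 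On the diagonal $a=b$ the left-hand side of the identity vanishes while $(\gamma-1)\sqrt{ab}>0$ whenever $h'>1$, so there is no ``lower estimate on $(a-b)^2/(a+b)^2$ controlled by $h'$'' to be had, and the constant $\frac{1}{8}\left(1-\frac{1}{h'}\right)^2$ cannot crystallise from this bookkeeping. Your calibration step is not merely delicate; as formulated it is impossible.

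The paper does not derive the scalar refinement from first principles. It imports Corollary 3.15 of \cite{Furuichi2019}, namely $\exp_r\bigl(\frac{v(1-v)}{2}(1-\frac{1}{h'})^2\bigr)A\sharp_v B\le A\nabla_v B$ under the stated two-sided bounds, specializes to $v=\frac{1}{2}$ and $r=-1$ (using that $\exp_r(x)=(1+rx)^{1/r}$ is decreasing in $r$ on $[-1,0)$, so $r=-1$ gives the strongest constant), and reads off $\gamma=\bigl(1-\frac{1}{8}(1-\frac{1}{h'})^2\bigr)^{-1}$. A correct write-up therefore either quotes that external result with its precise hypotheses or re-proves it; the elementary Kantorovich-style argument you propose is structurally incapable of producing a constant $\gamma>1$ valid on all of $[m',M']^2$, so the missing ingredient is not a computation but the cited inequality itself.
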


\begin{proof}
From \cite[Corollary 3.15]{Furuichi2019}, we have
\[{\exp _r}\left( {\frac{{v\left( {1 - v} \right)}}{2}{{\left( {1 - \frac{1}{{h'}}} \right)}^2}} \right)A{\sharp _v}B \le A{\nabla _v}B\]
for $A,B > 0$ with $m',M' > 0$ satisfying $0 < m'\le A \le B \le M'$ or $0 < m'\le B \le A \le M'$, where ${\exp _r}\left( x \right): = {\left( {1 + rx} \right)^{1/r}},\,{\rm{if}}\,\,1 + rx > 0$, and it is undefined otherwise.
Since $\exp_r(x)$ is decreasing in $r \in [-1,0)$, the above inequality gives a tight lower bound when $r=-1.$
After all, we have the scalar inequality:
\[\gamma \sqrt {ab}  \le \frac{{a + b}}{2}\]
for $a,b>0$ and $m',M'>0$ such that 
$0<m' \leq \min\{a,b\} \leq \max \{a,b\} \leq M'$.
Applying this inequality with a similar argument as in Theorem \ref{23}, we obtain the desired result.
\end{proof}

We also obtain the similar remarks with Remark \ref{remark2.2}, we omit them.

As we have seen, Lemma \ref{18} played an essential role in Proposition \ref{30} and Theorem \ref{23}.  In the following, we aim to improve Lemma \ref{18}.
\begin{proposition}\label{005}
	Let the assumptions of Lemma \ref{18} hold. Then
	\begin{equation}\label{31}
\left\| f\left( \left( 1-v \right)A+vB \right) \right\|\le \left\| \left( 1-v \right)f\left( A \right)+vf\left( B \right) \right\|-r\mu \left( f \right)	
	\end{equation}
	where $r=\min \left\{ v,1-v \right\}$, and
	\begin{equation}\label{002}
	\mu \left( f \right)=\underset{\left\| x \right\|=1}{\mathop{\inf }}\,\left\{ f\left( \left\langle Ax,x \right\rangle  \right)+f\left( \left\langle Bx,x \right\rangle  \right)-2f\left( \left\langle \left( \frac{A+B}{2} \right)x,x \right\rangle  \right) \right\}.	
	\end{equation}
\end{proposition}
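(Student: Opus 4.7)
The plan is to refine Lemma \ref{18} at the scalar level first, and then transport the sharpened estimate to operators using the Mond--Pe\v cari\'c inequality (Lemma \ref{21}). The scalar ingredient I will use is the following classical refinement of Jensen's inequality: for a convex function $f$ on $[0,\infty)$, scalars $a,b\ge 0$, and $0<v<1$,
\[
f\bigl((1-v)a+vb\bigr)+r\Bigl[f(a)+f(b)-2f\bigl(\tfrac{a+b}{2}\bigr)\Bigr]\le (1-v)f(a)+vf(b),\qquad r=\min\{v,1-v\}.
\]
Assuming WLOG $v\le\tfrac12$ (so that $r=v$), one obtains this in one line by writing $(1-v)a+vb=(1-2v)a+2v\cdot\tfrac{a+b}{2}$ and applying convexity, then adding $v[f(a)+f(b)-2f(\tfrac{a+b}{2})]$ to both sides. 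Refinements of this form already appear in the authors' earlier work \cite{moradi1,moradi2}.

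First, for each unit vector $x\in\mathcal{H}$, I would apply the scalar inequality with $a=\langle Ax,x\rangle$ and $b=\langle Bx,x\rangle$. By the very definition of $\mu(f)$ in \eqref{002}, the bracketed quantity on the right is bounded below by $\mu(f)$ independently of $x$, which gives
\[
f\bigl(\langle((1-v)A+vB)x,x\rangle\bigr)\le (1-v)f(\langle Ax,x\rangle)+vf(\langle Bx,x\rangle)-r\mu(f).
\]
I would then invoke Lemma \ref{21}: since $f$ is convex, $f(\langle Ax,x\rangle)\le\langle f(A)x,x\rangle$ and likewise for $B$. Substituting yields
\[
f\bigl(\langle((1-v)A+vB)x,x\rangle\bigr)\le \bigl\langle\bigl[(1-v)f(A)+vf(B)\bigr]x,x\bigr\rangle-r\mu(f).
\]

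Finally, I would take the supremum over unit vectors $x$. On the right, positivity of $(1-v)f(A)+vf(B)$ identifies the supremum with the operator norm, producing $\|(1-v)f(A)+vf(B)\|-r\mu(f)$. On the left, setting $T:=(1-v)A+vB$, positivity of $T$ together with continuity, non-negativity, and monotonicity of $f$ on $[0,\infty)$ yields $\sup_{\|x\|=1}f(\langle Tx,x\rangle)=f(\|T\|)=\|f(T)\|$, which is the only step requiring any real care: it combines monotonicity of $f$ to pull $f$ outside the supremum, positivity of $T$ to recognise $\sup\langle Tx,x\rangle=\|T\|$, and the facts $\|T\|\in\sigma(T)$ and $f\ge 0$ to identify $f(\|T\|)$ with the norm of the positive operator $f(T)$. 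Piecing these together gives exactly \eqref{31}; crucially the $x$-independent constant $r\mu(f)$ passes through the supremum unchanged.
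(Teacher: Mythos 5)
Your proposal is correct and follows essentially the same route as the paper: the same decomposition $(1-v)a+vb=(1-2v)a+2v\cdot\frac{a+b}{2}$ combined with convexity and the definition of $\mu(f)$, followed by Mond--Pe\v cari\'c and the identification $\sup_{\|x\|=1}f(\langle Tx,x\rangle)=f(\|T\|)=\|f(T)\|$ for positive $T$. The only difference is organisational --- you isolate the scalar Jensen refinement first and then substitute inner products, whereas the paper runs the computation directly at the level of $\langle Ax,x\rangle$ --- and both arguments handle $v\le\frac12$ and appeal to symmetry for the other case.
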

\begin{proof}
	We assume $0\le v\le \frac{1}{2}$. For each unit vector $x\in \mathcal{H}$,
	\begin{align}
	\nonumber f\left( \left\langle \left( \left( 1-v \right)A+vB \right)x,x \right\rangle  \right)+r\mu \left( f \right)&=f\left( \left( 1-v \right)\left\langle Ax,x \right\rangle +v\left\langle Bx,x \right\rangle  \right)+r\mu \left( f \right) \nonumber\\ 
	& =f\left( \left( 1-2v \right)\left\langle Ax,x \right\rangle +2v\left\langle \left( \frac{A+B}{2} \right)x,x \right\rangle  \right)+r\mu \left( f \right) \nonumber\\ 
	& \le \left( 1-2v \right)f\left( \left\langle Ax,x \right\rangle  \right)+2vf\left( \left\langle \left( \frac{A+B}{2} \right)x,x \right\rangle  \right)+r\mu \left( f \right) \label{001}\\ 
	& \le \left( 1-2v \right)f\left( \left\langle Ax,x \right\rangle  \right)+2vf\left( \left\langle \left( \frac{A+B}{2} \right)x,x \right\rangle  \right) \label{003}\\ 
	&\quad +r\left( f\left( \left\langle Ax,x \right\rangle  \right)+f\left( \left\langle Bx,x \right\rangle  \right)-2f\left( \left\langle \left( \frac{A+B}{2} \right)x,x \right\rangle  \right) \right) \nonumber\\ 
	& =\left( 1-v \right)f\left( \left\langle Ax,x \right\rangle  \right)+vf\left( \left\langle Bx,x \right\rangle  \right) \nonumber\\ 
	& \le \left( 1-v \right)\left\langle f\left( A \right)x,x \right\rangle +v\left\langle f\left( B \right)x,x \right\rangle \label{004} \\ 
	& =\left\langle \left( \left( 1-v \right)f\left( A \right)+vf\left( B \right) \right)x,x \right\rangle   \nonumber
	\end{align}
	where \eqref{001} follows from convexity of $f$, the relation \eqref{002} implies \eqref{003}, and \eqref{004} follows from Lemma \ref{21}.
	
If we apply similar arguments for $\frac{1}{2}\le v\le 1$, then we can write
	\[f\left( \left\langle \left( \left( 1-v \right)A+vB \right)x,x \right\rangle  \right)\le \left\langle \left( \left( 1-v \right)f\left( A \right)+vf\left( B \right) \right)x,x \right\rangle -r\mu \left( f \right).\]
We know that if $A\in \mathbb{B}\left( \mathcal{H} \right)$ is a positive operator, then $\left\| A \right\|={{\sup }_{\left\| x \right\|=1}}\left\langle Ax,x \right\rangle $. By using this, the continuity and the increase of $f$, we have
\[\begin{aligned}
 f\left( \left\| \left( 1-v \right)A+vB \right\| \right)&=f\left( \underset{\left\| x \right\|=1}{\mathop{\sup }}\,\left\langle \left( \left( 1-v \right)A+vB \right)x,x \right\rangle  \right) \\ 
& =\underset{\left\| x \right\|=1}{\mathop{\sup }}\,f\left( \left\langle \left( \left( 1-v \right)A+vB \right)x,x \right\rangle  \right) \\ 
& \le \underset{\left\| x \right\|=1}{\mathop{\sup }}\,\left( \left\langle \left( \left( 1-v \right)f\left( A \right)+vf\left( B \right) \right)x,x \right\rangle  \right)-r\mu \left( f \right) \\ 
& =\left\| \left( 1-v \right)f\left( A \right)+vf\left( B \right) \right\|-r\mu \left( f \right).  
\end{aligned}\]
On the other hand, if $X\in \mathbb{B}\left( \mathcal{H} \right)$, and if $f$ is a non-negative increasing function on $\left[ 0,\infty  \right)$, then $f\left( \left\| X \right\| \right)=\left\| f\left( \left| X \right| \right) \right\|$, so we get the desired result.	
	
\end{proof}

\begin{remark}
With inequality \eqref{31} in hand, we can improve Proposition \ref{30} and Theorem \ref{23}. For instance, under the assumptions of Proposition \ref{30}, we have
\[h\left( {{w}^{2}}\left( {{A}^{*}}XB \right) \right)\le \left\| \left( 1-v \right)h\left( {{\left( {{B}^{*}}{{f}^{2}}\left( \left| X \right| \right)B \right)}^{\frac{1}{1-v}}} \right)+vh\left( {{\left( {{A}^{*}}{{g}^{2}}\left( \left| {{X}^{*}} \right| \right)A \right)}^{\frac{1}{v}}} \right) \right\|-r\gamma \left( f \right)\]
where
\[\begin{aligned}
 \gamma \left( f \right)&=\underset{\left\| x \right\|=1}{\mathop{\inf }}\,\left\{ h\left( \left\langle {{\left( {{B}^{*}}{{f}^{2}}\left( \left| X \right| \right)B \right)}^{\frac{1}{1-v}}}x,x \right\rangle  \right)+h\left( \left\langle {{\left( {{A}^{*}}{{g}^{2}}\left( \left| {{X}^{*}} \right| \right)A \right)}^{\frac{1}{v}}}x,x \right\rangle  \right) \right. \\ 
&\quad \left. -2h\left( \left\langle \left( \frac{{{\left( {{B}^{*}}{{f}^{2}}\left( \left| X \right| \right)B \right)}^{\frac{1}{1-v}}}+{{\left( {{A}^{*}}{{g}^{2}}\left( \left| {{X}^{*}} \right| \right)A \right)}^{\frac{1}{v}}}}{2} \right)x,x \right\rangle  \right) \right\}.
\end{aligned}\]
\end{remark}

Now we present some inequalities for the numerical radius and operator norm, but under the effect of a superquadratic function. Recall that a function $f:\left[ 0,\infty  \right)\to \mathbb{R}$ is said to be superquadratic provided that for all $s\ge 0,$ there exists a constant ${{C}_{s}}\in \mathbb{R}$ such that
\begin{equation}\label{01}
f\left( \left| t-s \right| \right)+{{C}_{s}}\left( t-s \right)+f\left( s \right)\le f\left( t \right)
\end{equation}
for all $t\ge 0$.

The following useful lemma is well known  \cite[Lemma 2.1]{01}.
\begin{lemma}\label{12}
	Suppose that $f$ is superquadratic and non-negative. Then $f$ is convex and increasing. Also, if ${{C}_{s}}$ is as in \eqref{01}, then ${{C}_{s}}\ge 0$.
\end{lemma}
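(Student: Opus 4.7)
The plan is to extract all three conclusions from the defining inequality \eqref{01} by three strategic choices of the free variable $t$, exploiting the non-negativity of $f$ at each step. The guiding observations are: (i) setting $t=s$ pins down $f(0)$; (ii) discarding the non-negative term $f(|t-s|)$ in \eqref{01} turns the superquadratic inequality into a subgradient-type estimate, from which convexity (and then monotonicity) falls out; (iii) setting $t=0$ extracts the sign of $C_s$.

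Concretely, I would first put $t=s$ in \eqref{01} to obtain $f(0)+f(s)\le f(s)$, i.e.\ $f(0)\le 0$, which combined with $f\ge 0$ forces $f(0)=0$. Next, dropping the non-negative summand $f(|t-s|)\ge 0$ from \eqref{01} yields
\[
f(t)\ge f(s)+C_s(t-s),\qquad t,s\ge 0,
\]
so $f$ admits a supporting line at every point of $[0,\infty)$; by the standard argument (apply the above at a convex combination $s=\lambda a+(1-\lambda)b$ with $t=a$ and $t=b$, multiply by $\lambda$ and $1-\lambda$, and add), this delivers convexity of $f$. Monotonicity then follows from $f(0)=0$ and $f\ge 0$: for $0\le a\le b$ convexity applied to $a=(a/b)b+(1-a/b)\cdot 0$ gives $f(a)\le (a/b)f(b)\le f(b)$. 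Finally, setting $t=0$ in \eqref{01} and using $f(0)=0$ produces $2f(s)-s C_s\le 0$, i.e.\ $C_s\ge 2f(s)/s\ge 0$ for every $s>0$, while at $s=0$ the choice $C_0=0$ is admissible in \eqref{01}.

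I do not expect a substantive obstacle: all four steps are essentially one-line manipulations. The only points that demand a word of care are (a) the standard passage from pointwise subgradient existence to convexity on the half-line, and (b) treating $s=0$ separately when deducing $C_s\ge 0$, since the bound $2f(s)/s$ is vacuous there and one has to appeal instead to the freedom in choosing $C_0$.
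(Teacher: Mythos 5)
Your proof is correct. The paper itself offers no proof of this lemma---it simply cites \cite[Lemma 2.1]{01}---and your argument (forcing $f(0)=0$ via $t=s$, reading \eqref{01} as a supporting-line inequality $f(t)\ge f(s)+C_s(t-s)$ to obtain convexity, deducing monotonicity from $f(0)=0$ and $f\ge 0$, and taking $t=0$ to get $C_s\ge 2f(s)/s\ge 0$ for $s>0$, with the $s=0$ case handled separately) is essentially the standard proof given in that reference.
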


By adopting the above notions, we can refine the second inequality in \eqref{14}.
\begin{theorem}\label{11}
	Let $A\in \mathbb{B}\left( \mathcal{H} \right)$ and let $f$ be a non-negative superquadratic function. Then
	\begin{equation}\label{05}
	f\left( w\left( A \right) \right)\le \left\| f\left( \left| A \right| \right) \right\|-\underset{\left\| x \right\|=1}{\mathop{\inf }}\,{{\left\| f{{\left( \left| \left| A \right|-w\left( A \right) \right| \right)}^{\frac{1}{2}}}x \right\|}^{2}}.
	\end{equation}
\end{theorem}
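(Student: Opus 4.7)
The plan is to combine the operator form of the superquadratic inequality with the defining identity $\|A\|=\sup_{\|x\|=1}\langle|A|x,x\rangle$. Applying \eqref{01} with $s=w(A)$, viewed as a pointwise scalar inequality $f(|t-w(A)|)+C_{w(A)}(t-w(A))+f(w(A))\le f(t)$ for every $t\ge 0$, and invoking the continuous functional calculus on the positive operator $|A|$ (whose spectrum lies in $[0,\|A\|]$), I would obtain the operator inequality
\[
f(||A|-w(A)I|)+C_{w(A)}(|A|-w(A)I)+f(w(A))I\le f(|A|).
\]
Pairing with an arbitrary unit vector $x$ and using $\langle f(|A|)x,x\rangle\le\|f(|A|)\|$ (valid since $f(|A|)\ge 0$), a rearrangement yields
\[
\langle f(||A|-w(A)I|)x,x\rangle\le\|f(|A|)\|-f(w(A))-C_{w(A)}\bigl(\langle|A|x,x\rangle-w(A)\bigr).
\]

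Next, I would take the infimum over all unit $x$ on both sides. On the right-hand side, the term $C_{w(A)}$ is a non-negative constant (by Lemma \ref{12}), so the infimum is attained in the limit $\langle|A|x,x\rangle\to\sup_{\|x\|=1}\langle|A|x,x\rangle=\|A\|$, giving
\[
\inf_{\|x\|=1}\langle f(||A|-w(A)I|)x,x\rangle\le\|f(|A|)\|-f(w(A))-C_{w(A)}(\|A\|-w(A)).
\]
Since $\|A\|\ge w(A)$ and $C_{w(A)}\ge 0$, the subtracted term is non-negative and may be dropped, producing
\[
\inf_{\|x\|=1}\langle f(||A|-w(A)I|)x,x\rangle\le\|f(|A|)\|-f(w(A)),
\]
which is precisely \eqref{05} after noting that $\|f(||A|-w(A)I|)^{1/2}x\|^2=\langle f(||A|-w(A)I|)x,x\rangle$.

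The main obstacle is the justification of the first operator inequality, which relies on continuous functional calculus applied to the scalar form of \eqref{01}. This requires continuity of the functions $t\mapsto f(|t-w(A)|)$ and $t\mapsto C_{w(A)}(t-w(A))+f(w(A))$ on $[0,\|A\|]$; this is available since $f$ is convex by Lemma \ref{12}, hence continuous on $[0,\infty)$ (with $f(0)=0$, as enforced by setting $t=s$ in \eqref{01}). A notable feature is that the argument does not require the supremum $\sup_{\|x\|=1}\langle|A|x,x\rangle=\|A\|$ to be attained: it suffices to evaluate the infimum of a linear functional in $\langle|A|x,x\rangle$, which matches the supremum even in the absence of extremizers, so the proof is uniform across the generic case $\|A\|>w(A)$ and the borderline case $\|A\|=w(A)$ (where the inequality becomes the trivial $0\le 0$).
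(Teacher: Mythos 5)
Your proof is correct and follows essentially the same route as the paper: substitute $s=w(A)$ into the superquadratic inequality, apply the functional calculus to $|A|$, pair with unit vectors, and discard the term $C_{w(A)}\left(\|A\|-w(A)\right)\ge 0$ using Lemma \ref{12} together with $\|A\|\ge w(A)$. If anything, your handling of the final supremum/infimum rearrangement is spelled out more carefully than the paper's one-line appeal to $w(|A|)=\|A\|\ge w(A)$.
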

\begin{proof}
	Letting $s=w\left( A \right)$ in the inequality \eqref{01}, we get
	\begin{equation}\label{02}
	f\left( \left| t-w\left( A \right) \right| \right)+{{C}_{w\left( A \right)}}\left( t-w\left( A \right) \right)+f\left( w\left( A \right) \right)\le f\left( t \right).
	\end{equation}
	By applying functional calculus for the operator $\left| A \right|$ in \eqref{02} we get
	\begin{equation}\label{needed_1}
	f\left( \left| \left| A \right|-w\left( A \right) \right| \right)+{{C}_{w\left( A \right)}}\left( \left| A \right|-w\left( A \right) \right)+f\left( w\left( A \right) \right)\le f\left( \left| A \right| \right).
	\end{equation}
	Consequently,
	\begin{equation}\label{03}
	{{\left\| f{{\left( \left| \left| A \right|-w\left( A \right) \right| \right)}^{\frac{1}{2}}}x \right\|}^{2}}+{{C}_{w\left( A \right)}}\left( \left\langle \left| A \right|x,x \right\rangle -w\left( A \right) \right)+f\left( w\left( A \right) \right)\le \left\langle f\left( \left| A \right| \right)x,x \right\rangle
	\end{equation}
	for any unit vector $x\in \mathcal{H}$. 
	
	Now, by taking supremum over $x\in \mathcal{H}$ with $\left\| x \right\|=1$ in \eqref{03}, and using the fact $w\left( \left| A \right| \right)=\left\| A \right\|\ge w\left( A \right)$, we deduce the desired inequality \eqref{05}.
\end{proof}

Applying Theorem \ref{11} to the superquadratic function $f\left( t \right)={{t}^{r}}\left( r\ge 2 \right)$, we reach the following corollary:
\begin{corollary}
	Let $A\in \mathbb{B}\left( \mathcal{H} \right)$. Then for any $r\ge 2$,
	\[{{w}^{r}}\left( A \right)\le {{\left\| A \right\|}^{r}}-\underset{\left\| x \right\|=1}{\mathop{\inf }}\,{{\left\| {{\left| \left| A \right|-w\left( A \right) \right|}^{\frac{r}{2}}}x \right\|}^{2}}.\]
	In particular,
	\[w\left( A \right)\le \sqrt{{{\left\| A \right\|}^{2}}-\underset{\left\| x \right\|=1}{\mathop{\inf }}\,{{\left\| \left| \left| A \right|-w\left( A \right) \right|x \right\|}^{2}}}\le \left\| A \right\|.\]
\end{corollary}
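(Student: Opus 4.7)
The plan is to read off both statements directly from Theorem \ref{11} by choosing $f(t)=t^r$, so almost all of the work has already been done; what remains is a superquadraticity check and an identification of the norm term.

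First I would verify that $f(t)=t^r$ is a non-negative superquadratic function on $[0,\infty)$ whenever $r\ge 2$. This is a standard fact going back to the work of Abramovich--Jameson--Sinnamon on superquadratic functions, and it can be seen directly: the choice $C_s=rs^{r-1}$ makes the inequality
\[
|t-s|^r+rs^{r-1}(t-s)+s^r\le t^r\qquad(t,s\ge 0)
\]
valid for $r\ge 2$, as one checks by fixing $s$, differentiating in $t$, and comparing the two cases $t\ge s$ and $t<s$. (For $t\ge s$ this reduces, after dividing by $s^r$ and substituting $u=t/s\ge 1$, to $(u-1)^r+ru-u^r\le 0$, which holds by convexity of $u^r$; the case $t<s$ is analogous.)

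Next I would feed this choice into Theorem \ref{11}. Its conclusion becomes
\[
w^r(A)\le \left\||A|^r\right\|-\underset{\|x\|=1}{\inf}\left\| |\,|A|-w(A)\,|^{r/2}x\right\|^2,
\]
and since $|A|\ge 0$ we have $\||A|^r\|=\||A|\|^r=\|A\|^r$, giving the first displayed inequality exactly as stated. Note that Lemma \ref{12} confirms that $f(t)=t^r$, being non-negative and superquadratic, is indeed non-negative increasing and convex, so all hypotheses of Theorem \ref{11} are met and the associated constants $C_s\ge 0$; in particular the step in the proof of Theorem \ref{11} that uses $w(|A|)=\|A\|\ge w(A)$ is legitimate here.

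For the "in particular" part I would simply specialize to $r=2$: since $2\ge 2$ the inequality above gives
\[
w^2(A)\le \|A\|^2-\underset{\|x\|=1}{\inf}\left\| |\,|A|-w(A)\,|\,x\right\|^2,
\]
and taking square roots yields the middle expression. The final bound $\sqrt{\|A\|^2-\inf_{\|x\|=1}\||\,|A|-w(A)\,|x\|^2}\le \|A\|$ is immediate because the infimum is non-negative. The only substantive point in the whole proof is the superquadraticity of $t^r$ for $r\ge 2$; everything else is bookkeeping.
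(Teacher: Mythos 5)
Your proposal follows exactly the paper's route: the corollary is obtained by substituting the superquadratic function $f\left( t \right)={{t}^{r}}$, $r\ge 2$, into Theorem \ref{11} and using $\left\| {{\left| A \right|}^{r}} \right\|={{\left\| A \right\|}^{r}}$ together with the choice $r=2$ for the second display, which is all the paper does. One small correction to your superquadraticity check: for $t\ge s>0$ the inequality ${{\left| t-s \right|}^{r}}+r{{s}^{r-1}}\left( t-s \right)+{{s}^{r}}\le {{t}^{r}}$ divides through to ${{\left( u-1 \right)}^{r}}+r\left( u-1 \right)+1\le {{u}^{r}}$ with $u=t/s$, not to ${{\left( u-1 \right)}^{r}}+ru\le {{u}^{r}}$ (the latter fails at $u=1$); the corrected inequality does hold (both sides agree at $u=1$ and comparing derivatives reduces to ${{\left( u-1 \right)}^{r-1}}+1\le {{u}^{r-1}}$, i.e.\ superadditivity of $u\mapsto {{u}^{r-1}}$), so the standard fact you invoke is true and the rest of the argument is unaffected.
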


\section{An inequality related to  f--connection of operators}
In the forthcoming, we aim to extend the main result of \cite{hosseini}.

 In \cite[Theorem 2.3]{hosseini}, the author tried to prove the numerical radius version of operator arithmetic-geometric mean inequality
\[{{w}^{r}}\left( \left( A\sharp B \right)X \right)\le w\left( \frac{{{A}^{\frac{rp}{2}}}}{p}+\frac{{{\left( {{X}^{*}}BX \right)}^{\frac{rq}{2}}}}{q} \right)-\frac{1}{p}\underset{\left\| x \right\|=1}{\mathop{\inf }}\,\delta \left( x \right)\]
where $A,B,X\in \mathbb{B}\left( \mathcal{H} \right)$ such that $A,B$ are positive invertible operators, $p\ge q>1$, $\frac{1}{p}+\frac{1}{q}=1$, $r\ge \frac{2}{q}$, and $\delta \left( x \right)={{\left( {{\left\langle Ax,x \right\rangle }^{\frac{rp}{4}}}-{{\left\langle {{X}^{*}}BXx,x \right\rangle }^{\frac{rq}{4}}} \right)}^{2}}$.

Of course, $\frac{{{A}^{\frac{rp}{2}}}}{p}+\frac{{{\left( {{X}^{*}}BX \right)}^{\frac{rq}{2}}}}{q}$ is positive. On the other hand, it is well-known to all that if $X$ is positive operator then $w\left( X \right)=\left\| X \right\|$. On taking into account these considerations, it should be written to the following form:
 \[{{w}^{r}}\left( \left( A\sharp B \right)X \right)\le \left\| \frac{{{A}^{\frac{rp}{2}}}}{p}+\frac{{{\left( {{X}^{*}}BX \right)}^{\frac{rq}{2}}}}{q} \right\|-\frac{1}{p}\underset{\left\| x \right\|=1}{\mathop{\inf }}\,\delta \left( x \right).\]
Of course, the geometric mean (resp. arithmetic mean) of two positive operators is also a positive operator. So Corollary 2.6, Corollary 2.7, Remark 2.8, and Corollary 2.10 in \cite{hosseini} should be written in the following way, respectively,
	\[{{\left\| A\sharp B \right\|}^{r}}\le \left\| \frac{{{A}^{\frac{rp}{2}}}}{p}+\frac{{{B}^{\frac{rq}{2}}}}{q} \right\|-\frac{1}{p}\underset{\left\| x \right\|=1}{\mathop{\inf }}\,\left\{ {{\left( {{\left\langle Ax,x \right\rangle }^{\frac{rp}{4}}}-{{\left\langle Bx,x \right\rangle }^{\frac{rq}{4}}} \right)}^{2}} \right\},\] 
\[{{\left\| A\sharp B \right\|}^{2r}}\le \left\| \frac{{{A}^{rp}}}{p}+\frac{{{B}^{rq}}}{q} \right\|-\frac{1}{p}\underset{\left\| x \right\|=1}{\mathop{\inf }}\,\left\{ {{\left( {{\left\langle Ax,x \right\rangle }^{\frac{rp}{2}}}-{{\left\langle Bx,x \right\rangle }^{\frac{rq}{2}}} \right)}^{2}} \right\},\]
\[{{\left\| A\sharp B \right\|}^{2}}\le \left\| \frac{{{A}^{2}}+{{B}^{2}}}{2} \right\|-\frac{1}{2}\underset{\left\| x \right\|=1}{\mathop{\inf }}\,\left\{ {{\left\langle A-Bx,x \right\rangle }^{2}} \right\},\]
and
\[\sqrt{2}\left\| A\sharp B \right\|\le {{w}_{e}}\left( A,B \right)\le {{\left\| {{A}^{2}}+{{B}^{2}} \right\|}^{\frac{1}{2}}}.\]
Here ${{w}_{e}}\left( A,B \right)=\underset{\left\| x \right\|=1}{\mathop{\sup }}\,{{\left( {{\left| \left\langle Ax,x \right\rangle  \right|}^{2}}+{{\left| \left\langle Bx,x \right\rangle  \right|}^{2}} \right)}^{\frac{1}{2}}}$.

Let $f$ be a continuous function defined on the real interval $J$ containing the spectrum of ${{A}^{-\frac{1}{2}}}B{{A}^{-\frac{1}{2}}}$, where $B$ is a self-adjoint operator and $A$ is a positive invertible operator. Then by using the continuous functional calculus, we can define $f$-connection ${{\sigma }_{f}}$ as follows
\begin{equation}\label{25}
A{{\sigma }_{f}}B={{A}^{\frac{1}{2}}}f\left( {{A}^{-\frac{1}{2}}}B{{A}^{-\frac{1}{2}}} \right){{A}^{\frac{1}{2}}}.
\end{equation}
Note that for the functions $\left( 1-v \right)+vt$ and ${{t}^{v}}$, the  definition in \eqref{25} leads to the arithmetic and geometric operator means, respectively.

Now, we give our numerical radius inequality concerning $f$-connection of operators.
\begin{theorem}\label{22}
Let $A,B,X\in \mathbb{B}\left( \mathcal{H} \right)$ such that $A,B$ be two positive operators. Then
\begin{equation}\label{27}
w\left( \left( A{{\sigma }_{f}}B \right)X \right)\le \frac{1}{2}\left\| {{X}^{*}}{{A}^{\frac{1}{2}}}{{f}^{2}}\left( {{A}^{-\frac{1}{2}}}B{{A}^{-\frac{1}{2}}} \right){{A}^{\frac{1}{2}}}X+A \right\|.
\end{equation}
\end{theorem}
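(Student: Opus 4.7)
The plan is to reduce the numerical radius to an inner-product estimate, then apply the Cauchy--Schwarz inequality together with the AM--GM inequality $ab\le\tfrac12(a^2+b^2)$, exactly in the spirit of the generalized mixed Schwarz argument used earlier in the paper. Writing $Y=f\bigl(A^{-1/2}BA^{-1/2}\bigr)$ (which is self-adjoint whenever $f$ is real-valued), the definition \eqref{25} lets me rewrite, for any unit vector $x\in\mathcal{H}$,
\[
\bigl\langle (A{\sigma}_{f}B)Xx,x\bigr\rangle
=\bigl\langle Y\,A^{1/2}Xx,\;A^{1/2}x\bigr\rangle .
\]

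Next I would apply the usual Cauchy--Schwarz inequality to the right-hand side with $u=A^{1/2}Xx$ and $v=A^{1/2}x$, then use the scalar inequality $\alpha\beta\le\tfrac12(\alpha^{2}+\beta^{2})$ to get
\[
\bigl|\langle (A{\sigma}_{f}B)Xx,x\rangle\bigr|
\le \|YA^{1/2}Xx\|\,\|A^{1/2}x\|
\le \tfrac{1}{2}\Bigl(\|YA^{1/2}Xx\|^{2}+\|A^{1/2}x\|^{2}\Bigr).
\]
Since $Y$ is self-adjoint, $\|YA^{1/2}Xx\|^{2}=\langle X^{*}A^{1/2}Y^{2}A^{1/2}Xx,x\rangle$, and $\|A^{1/2}x\|^{2}=\langle Ax,x\rangle$, so the right-hand side is exactly
\[
\tfrac{1}{2}\Bigl\langle \bigl(X^{*}A^{1/2}f^{2}(A^{-1/2}BA^{-1/2})A^{1/2}X+A\bigr)x,\,x\Bigr\rangle .
\]

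Finally, taking the supremum over unit vectors $x$, and observing that the operator $X^{*}A^{1/2}f^{2}(A^{-1/2}BA^{-1/2})A^{1/2}X+A$ is a sum of positive operators, hence itself positive, one has $\sup_{\|x\|=1}\langle Px,x\rangle=\|P\|$ for this $P$; this yields \eqref{27}.

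I do not expect a serious obstacle. The only delicate points are (i) noting that $Y=f(A^{-1/2}BA^{-1/2})$ is self-adjoint so that $Y^{*}Y=f^{2}(A^{-1/2}BA^{-1/2})$, and (ii) confirming that the bounding operator is positive so that its numerical radius (what the supremum of the quadratic form produces) coincides with its operator norm. Both are standard. One could also view the first step as a direct invocation of the generalized mixed Schwarz inequality (Lemma \ref{1}) applied to the self-adjoint operator $Y$ with the split $f_{1}(t)=g_{1}(t)=\sqrt{t}$, but the elementary Cauchy--Schwarz route above is cleaner.
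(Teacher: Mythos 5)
Your proof is correct and follows essentially the same route as the paper: rewrite $\langle (A\sigma_f B)Xx,x\rangle$ as $\langle f(A^{-1/2}BA^{-1/2})A^{1/2}Xx, A^{1/2}x\rangle$, apply Cauchy--Schwarz, then the scalar AM--GM bound, and take the supremum using that the bounding operator is positive. The paper writes the middle step as $\sqrt{ab}\le\frac{a+b}{2}$ rather than $\alpha\beta\le\frac12(\alpha^2+\beta^2)$, which is the same inequality.
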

\begin{proof}
For any unit vector $x\in \mathcal{H}$, we have
\[\begin{aligned}
 \left| \left\langle \left( A{{\sigma }_{f}}B \right)Xx,x \right\rangle  \right|&=\left| \left\langle {{A}^{\frac{1}{2}}}f\left( {{A}^{-\frac{1}{2}}}B{{A}^{-\frac{1}{2}}} \right){{A}^{\frac{1}{2}}}Xx,x \right\rangle  \right| \\ 
& =\left| \left\langle f\left( {{A}^{-\frac{1}{2}}}B{{A}^{-\frac{1}{2}}} \right){{A}^{\frac{1}{2}}}Xx,{{A}^{\frac{1}{2}}}x \right\rangle  \right| \\ 
& \le \left\| f\left( {{A}^{-\frac{1}{2}}}B{{A}^{-\frac{1}{2}}} \right){{A}^{\frac{1}{2}}}Xx \right\|\left\| {{A}^{\frac{1}{2}}}x \right\| \\ 
& =\sqrt{\left\langle f\left( {{A}^{-\frac{1}{2}}}B{{A}^{-\frac{1}{2}}} \right){{A}^{\frac{1}{2}}}Xx,f\left( {{A}^{-\frac{1}{2}}}B{{A}^{-\frac{1}{2}}} \right){{A}^{\frac{1}{2}}}Xx \right\rangle \left\langle {{A}^{\frac{1}{2}}}x,{{A}^{\frac{1}{2}}}x \right\rangle } \\ 
& =\sqrt{\left\langle {{X}^{*}}{{A}^{\frac{1}{2}}}{{f}^{2}}\left( {{A}^{-\frac{1}{2}}}B{{A}^{-\frac{1}{2}}} \right){{A}^{\frac{1}{2}}}Xx,x \right\rangle \left\langle Ax,x \right\rangle } \\ 
& \le \frac{1}{2}\left\langle {{X}^{*}}{{A}^{\frac{1}{2}}}{{f}^{2}}\left( {{A}^{-\frac{1}{2}}}B{{A}^{-\frac{1}{2}}} \right){{A}^{\frac{1}{2}}}X+Ax,x \right\rangle.
\end{aligned}\]
Now, the result follows by taking the supremum over $x\in \mathcal{H}$ with $\left\| x \right\|=1$.
\end{proof}
By choosing $f\left( t \right)=\sqrt{t}$, in Theorem \ref{22} we reach the following result:
\begin{corollary}
Let $A,B,X\in \mathbb{B}\left( \mathcal{H} \right)$ such that $A,B$ be two positive operators. Then
\[w\left( \left( A\sharp B \right)X \right)\le \frac{1}{2}\left\| {{X}^{*}}BX+A \right\|.\]
\end{corollary}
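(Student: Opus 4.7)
The plan is to specialize Theorem \ref{22} to the choice $f(t)=\sqrt{t}$, which is continuous and non-negative on $\left[ 0,\infty  \right)$. With this $f$, the functional-calculus definition \eqref{25} collapses to
\[A{{\sigma }_{f}}B={{A}^{\frac{1}{2}}}{{\left( {{A}^{-\frac{1}{2}}}B{{A}^{-\frac{1}{2}}} \right)}^{\frac{1}{2}}}{{A}^{\frac{1}{2}}}=A\sharp B,\]
so the left-hand side of \eqref{27} already matches the left-hand side of the corollary.

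For the right-hand side, I would next use that ${{f}^{2}}\left( t \right)=t$ to conclude via functional calculus that
\[{{f}^{2}}\left( {{A}^{-\frac{1}{2}}}B{{A}^{-\frac{1}{2}}} \right)={{A}^{-\frac{1}{2}}}B{{A}^{-\frac{1}{2}}},\]
whence
\[{{X}^{*}}{{A}^{\frac{1}{2}}}{{f}^{2}}\left( {{A}^{-\frac{1}{2}}}B{{A}^{-\frac{1}{2}}} \right){{A}^{\frac{1}{2}}}X={{X}^{*}}BX.\]
Plugging these two identifications into \eqref{27} produces exactly the stated bound
\[w\left( \left( A\sharp B \right)X \right)\le \frac{1}{2}\left\| {{X}^{*}}BX+A \right\|.\]

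The one minor wrinkle I anticipate is that the expression ${{A}^{-\frac{1}{2}}}$ appearing in \eqref{25} requires $A$ to be invertible, whereas the corollary is stated for a merely positive $A$. My plan for this is the standard perturbation trick: replace $A$ by ${{A}_{\varepsilon }}=A+\varepsilon I$ (which is positive invertible), apply the argument above to ${{A}_{\varepsilon }}$, and then let $\varepsilon \to {{0}^{+}}$, invoking continuity of the geometric mean in its first slot together with continuity of the operator norm and of the numerical radius. This is the only step that requires any thought, and I do not expect it to constitute a genuine obstacle.
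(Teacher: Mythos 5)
Your proposal is correct and follows essentially the same route as the paper: the corollary is obtained by taking $f(t)=\sqrt{t}$ in Theorem \ref{22}, so that $f^{2}\left( A^{-\frac{1}{2}}BA^{-\frac{1}{2}} \right)=A^{-\frac{1}{2}}BA^{-\frac{1}{2}}$ and the right-hand side of \eqref{27} collapses to $\frac{1}{2}\left\| X^{*}BX+A \right\|$. Your closing remark about replacing $A$ by $A+\varepsilon I$ is in fact more careful than the paper, which applies $A^{-\frac{1}{2}}$ to a merely positive $A$ without comment.
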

\begin{remark}
The interested reader can construct refinements of inequality \eqref{27} using improvements of weighted arithmetic-geometric mean inequality. We leave the details of this idea to the interested reader, as it is just an application of our result.
\end{remark}

\section*{Acknowledgements}
The authors would like to thank the anonymous reviewer for his/her comments.

\vskip 0.6 true cm

\tiny$^1$Department of Mathematics, Hormoz Branch, Islamic Azad University, Hormoz Island, Iran.

{\it E-mail address:} saratafazoli3@gmail.com

\vskip 0.4 true cm

\tiny$^2$Department of Mathematics, Payame Noor University (PNU), P.O. Box 19395-4697, Tehran, Iran.

{\it E-mail address:} hrmoradi@mshdiau.ac.ir

\vskip 0.4 true cm

\tiny$^3$Department of Information Science, College of Humanities and Sciences, Nihon University, 3-25-40, Sakurajyousui,
Setagaya-ku, Tokyo, 156-8550, Japan.

{\it E-mail address:} furuichi@chs.nihon-u.ac.jp

\vskip 0.4 true cm

\tiny$^4$Department of Mathematics, Manipal Institute of Technology, Manipal Academy of Higher Education, Deemed to be University, Manipal-576104, Karnataka, India.

{\it E-mail address:} pk.harikrishnan@manipal.edu
\end{document}